\newtheorem{theorem}{Theorem}[section]
\newtheorem{lemma}[theorem]{Lemma}
\newtheorem{proposition}[theorem]{Proposition}
\newtheorem{corollary}[theorem]{Corollary}
\newtheorem{question}[theorem]{Question}
\theoremstyle{definition}
\newtheorem{definition}[theorem]{Definition}
\newtheorem{example}[theorem]{Example}
\newtheorem{construction}[theorem]{Construction}
\theoremstyle{remark}
\newtheorem{remark}[theorem]{Remark}
\newtheorem*{notation}{Notation}
\DeclareMathOperator{\Ima}{Im}
\DeclareMathOperator{\Sym}{Sym}
\renewcommand{\hom}{\operatorname{Hom}}
\newcommand{\DM}{\operatorname{DM}}
\newcommand{\fold}{\operatorname{Fold}}
\newcommand{\pdim}{\operatorname{pdim}}
\newcommand{\ext}{\operatorname{Ext}}
\newcommand{\Spec}{\operatorname{Spec}}
\newcommand{\ch}{\operatorname{Ch}}
\newcommand{\pf}{\operatorname{Pf}}
\newcommand{\im}{\operatorname{im}}
\newcommand{\End}{\operatorname{End}}
\newcommand{\socdeg}{\operatorname{Socdeg}}
\newcommand{\dm}{\operatorname{DM}}
\newcommand{\bbN}{\mathbbm{N}}
\newcommand{\bbZ}{\mathbbm{Z}}
\newcommand{\bbz}{\mathbbm{Z}}
\newcommand{\bbk}{\mathbbm{k}}
\newcommand{\calX}{{\mathcal{X}}}
\newcommand{\bfF}{\mathbf{F}}
\newcommand{\del}{{\partial}}
\renewcommand{\th}{{^\text{th}}}
\newcommand{\qeq}{\overset{?}{=}}
\title{Differential modules and Deformations of Free Complexes}
\author{Maya Banks}
\author{Keller VandeBogert}
\date{\today}
\begin{document}

\maketitle

\begin{abstract}
    We classify (up to quasi-isomorphism) the free differential modules whose homology is equal to a given module $M$ by developing a theory for deforming an arbitrary free complex into a differential module. We use an iterative approach to parameterize the deformations and obstructions in terms of certain $\ext$ groups, giving an algorithmic realization of a result of Brown-Erman. We apply this theory to study certain rigidity properties of free resolutions and related rank conjectures.
\end{abstract}

\section{Introduction}

A differential module is a module equipped with a square-zero endomorphism. This square-zero map makes differential modules a natural generalizations of chain complexes---indeed, the category of differential modules is the category to which one sometimes must pass when the classical notion of $\bbz$-graded complexes is not general enough. This phenomenon has been observed for instance in the multigraded analog of Koszul duality (see \cite{hawwa2012koszul}, \cite{brown2021tate}) as well as in the literature on matrix factorizations, where it has been realized that one needs to ``deform" complexes in order to represent certain functors on the category of matrix factorizations (see \cite[Lemma 5.3.6]{oblomkov2020soergel}). Additionally, considering these more general objects can often shed new light on classical homological results by highlighting which properties of complexes are essential and which structure is extraneous. For instance, while arbitrary differential modules are quite general and lack the structure necessary to prove certain homological results, work of Avramov-Buchweitz-Iyengar \cite{avramov2007class} shows that differential modules admitting the structure of a \emph{flag} (see Definition \ref{def:flagGrading}) still possess many of the same properties as complexes. Differential modules with a flag structure are objects that lie somewhere between complexes---whose differentials respect a strict homological grading---and arbitrary differential modules. Work on the Buchsbaum-Eisenbud-Horrocks Conjecture on the Betti numbers of free resolutions and related conjectures of Halperin and Carlsson in algebraic topology \cite{avramov2007class,iyengar2018examples,walker2017total} helps bring into focus a structural spectrum ranging from minimal free resolutions as the most specific objects to arbitrary differential modules as the most general. At each step, we see certain results that carry over to the next level of generality, while others fail.



\begin{figure}[H]
\begin{tikzcd}[column sep=huge]
\begin{tabular}{c}\text{Free}\\ \text{resolutions}\end{tabular}  & \begin{tabular}{c}\text{Free}\\ \text{complexes}\end{tabular} \arrow[l,squiggly, "\text{exactness}"]  & \begin{tabular}{c}\text{Free}\\ \text{flags}\end{tabular} \arrow[l, squiggly, "\substack{\text{homological}\\\text{grading}}"] & \begin{tabular}{c}\text{Arbitrary}\\ \text{differential}\\ \text{modules}\end{tabular} \arrow[l,squiggly,"\text{flag structure}"]
\end{tikzcd}
\end{figure}

The overarching goal motivating our work is to understand how the objects on the right-hand side of the picture related to the more well-studied objects on the left. 
Brown-Erman begin to examine this relationship in their work on minimal free resolutions of differential modules. 
They demonstrate a tight link between arbitrary differential modules and free resolutions by showing that (under suitable hypotheses on the ambient ring), every differential module admits a \emph{minimal free resolution} that arises as a summand of a free flag whose structure is controlled by the minimal free resolution of the homology
(see \cite[Theorems 3.2 and 4.2]{brown2021minimal}). These results demonstrate that the theory of arbitrary differential modules can be related back to the theory of minimal free resolutions, while at the same time raising more questions about the nature of this relationship---what properties of the homology can be `lifted' to properties of the differential module? How much variation can there be in the behavior of differential modules with the same homology? What new geometric data (if any) do we add by passing from a minimal free resolution to a more general differential module with the same homology? To begin to answer these questions, we focus first on the following:

\begin{question}\label{q:dmsfixedhomology}
What are the possible differential modules with homology isomorphic to a given module? When are two such differential modules (quasi)isomorphic?
\end{question}

The key results of Brown-Erman allow us to pass from arbitrary differential modules to a special class of free flags---those that are \emph{anchored on} the minimal free resolution of their homology (see Definition \ref{def:flagGrading})---so long as we work up to quasi-isomorphism. This reduces our question to identifying the possible free flags that are anchored on the minimal free resolution of their homology. 
A free flag may be pictured as a free complex with additional compatible maps that ``go with the flow" of the complex's original differential but do not strictly respect the homological grading. The \emph{anchor} of the flag is the underlying complex, shown in red in the following visualization.
\begin{figure}[H]
\begin{tikzcd}
F_0 & F_1 \arrow[l, red] & F_2 \arrow[l, red] \arrow[ll, bend right] & F_3 \arrow[l, red] \arrow[lll, bend right] \arrow[ll, bend left] & \cdots \arrow[l, red] \arrow[ll, bend right, crossing over] \arrow[lll, bend left]
\end{tikzcd}
\end{figure}
Thus, the question at hand can be restated as follows:

\begin{question}
    If we start with a minimal free resolution, what are all of the ways that we can add in additional maps so that the total differential still squares to zero \emph{and} the homology remains unchanged?
\end{question}


Even in a small case, this question can be subtle.

\begin{example}
    Let $S = \bbk[x_1, \ldots, x_n]$ and $(D,\del_f)$ be the differential module with underlying module $D = S\oplus S(-1)^2\oplus S(-2)$ and differential given by the matrix
    \[
    \del_f = \begin{pmatrix}
    0 & x_1 & x_2 & f\\
    0 & 0 & 0 & -x_2\\
    0 & 0 & 0 & x_1\\
    0 & 0 & 0 & 0
    \end{pmatrix}
    \]
    where $f$ is some homogeneous degree 2 polynomial. For all choices of $f$, the homology of $(D,\del_f)$ is isomorphic to $S/(x_1, x_2)$ and $(D,\del_f)$ is anchored on the minimal free resolution of its homology, which we can visualize like this:
    \begin{figure}[H]
    \begin{tikzcd}[ampersand replacement=\&]
    S \& S(-1)^2 \arrow[l, red, "{\begin{pmatrix} x_1 & x_2 \end{pmatrix}}" red]  \& S(-2 \arrow[l, red, "{\begin{pmatrix} x_1 \\ x_2 \end{pmatrix}}" red] \arrow[ll, bend right, "f" '] 
    \end{tikzcd}
    \end{figure}

    It is not too hard to see that when $f\in (x_1, x_2)$, $D_f\simeq D_0$ since we can turn the matrix $\del_f$ into 
 the matrix $\del_0$ via elementary row and column operations. But when $f\notin (x_1, x_2)$ we get a differential module $D_f$ that is not isomorphic to $D_0$, meaning that we have at least two different isomorphism classes of free flags anchored on the minimal free resolution of $S/(x_1,x_2)$. Now, we ask: given two (nonzero) degree 2 polynomials $f,g$ are $D_f$ and $D_g$ isomorphic? To answer this question amounts to checking for similarity of matrices over the polynomial ring in $n$ variables, which is in general a hard problem. Note that it is easy to parameterize the set of free flags in the previous example that are anchored on the minimal free resolution of $S/(x_1, x_2)$ \emph{if we don't care about isomorphism}---such free flags are parameterized by homogeneous degree 2 polynomials. However, even in this small example it is not clear how to account for isomorphism in such a parameterization.
\end{example}

One approach to this problem was taken in \cite{banks2022differential}, where the authors construct a large family of differential modules anchored on the Koszul complex and prove that under certain hypotheses any differential module with complete intersection homology is quasi-isomorphic to one of these ``Koszul differential modules". That being said, there are examples of free flags anchored on the Koszul complex that \emph{do not} arise as a generalized Koszul differential module (see Example $4.7$ of \cite{banks2022differential}), so this is not a complete characterization. Moreover, this approach leaves open the fundamental questions about the structure theory of differential modules mentioned above.

In this paper, we present a totally different approach for answering Question \ref{q:dmsfixedhomology} that also seems to be a powerful tool for studying the uniform and asymptotic behavior of free flags with a fixed anchor. We furthermore recover and answer many of the results and questions posed in the work \cite{banks2022differential}, and further clarify the deep connection between the classical homological theory of modules and that of differential modules. Our approach is motivated by deformation theory. Brown-Erman point out that a free flag can be deformed to its anchor by adjoining a variable and zeroing out all of the maps above the anchor. We investigate the reverse direction of this observation: Given a free complex $\bfF$, we can deform $\bfF$ into a free flag by adding additional maps in a compatible way. We give a sense of the core idea below, reserving the full rigor and detail for later in the paper.

\subsection{Example: A Deformation Theoretic Approach}
Suppose we have a complex 
\[
F_0\xleftarrow{\del_{1,0}} F_1\xleftarrow{\del_{2,1}} F_2\xleftarrow{\del_{3,2}} F_3\xleftarrow{\del_{4,3}} F_4 .
\]
We consider this as a differential module with underlying module $F_0\oplus F_1\oplus F_2\oplus F_3\oplus F_4$ and differential given by the following block matrix.

\[
\begin{pmatrix}
0 & \del_{1,0} & 0 & 0 & 0\\
0 & 0 & \del_{2,1} & 0 & 0\\
0 & 0 & 0 & \del_{3,2} & 0\\
0 & 0 & 0 & 0 & \del_{4,3}\\
0 & 0 & 0 & 0 & 0
\end{pmatrix}.
\]
We want to think about deforming this complex by iteratively adding maps to the upper right blocks of this matrix. 
To do this, we adjoin a new variable $t$ to our ring and add maps $\del_{i,i-2}$ with $t$ as a coefficient to get the matrix
\[
\begin{pmatrix}
0 & \del_{1,0} & t\del_{2,0} & 0 & 0\\
0 & 0 & \del_{2,1} & t\del_{3,1} & 0\\
0 & 0 & 0 & \del_{3,2} & t\del_{4,2}\\
0 & 0 & 0 & 0 & \del_{4,3}\\
0 & 0 & 0 & 0 & 0
\end{pmatrix}.
\]
Requiring that this matrix square to 0 mod $t^2$ imposes the relations
\begin{align*}
    \del_{1,0}\del_{3,1} + \del_{2,0}\del_{3,2} &= 0\\
    \del_{2,1}\del_{4,2} + \del_{3,1}\del{4,3} &= 0
\end{align*}
which characterize the possible choices for $\del_{i,i-2}$. Next we add maps $\del_{i,i-3}$ with a coefficient of $t^2$ to get the matrix

\[
\begin{pmatrix}
0 & \del_{1,0} & t\del_{2,0} & t^2\del_{3,0} & 0\\
0 & 0 & \del_{2,1} & t\del_{3,1} & t^2\del_{4,1}\\
0 & 0 & 0 & \del_{3,2} & t\del_{4,2}\\
0 & 0 & 0 & 0 & \del_{4,3}\\
0 & 0 & 0 & 0 & 0
\end{pmatrix}.
\]
Requiring this to square to 0 mod $t^3$ imposes a new relation 
\[
\del_{1,0}\del_{4,1}+\del_{2,0}\del_{4,2} + \del_{3,0}\del_{4,3} = 0.
\]
This characterizes possible choices for $\del_{3,0}$ and $\del_{4,1}$, but it also introduces an obstruction: we may have chosen the $\del_{i,i-2}$ maps in such a way that there is \emph{no} compatible choice at the next step! For the final step, notice that any choice of map $t^3, \del_{4,0}$ in the top right corner of the matrix will be compatible with the square-zero condition mod $t^4$. Furthermore any such matrix that squares to 0 mod $t^4$ actually squares to 0 on the nose, since the square doesn't have any entries with a power of $t$ larger than 3 (in fact, no entry has a power of $t$ larger than 2). Setting $t=1$ we get a free flag, anchored on the complex we started with, whose differential is
\[
\begin{pmatrix}
0 & \del_{1,0} & \del_{2,0} & \del_{3,0} & \del_{4,0}\\
0 & 0 & \del_{2,1} & \del_{3,1} & \del_{4,1}\\
0 & 0 & 0 & \del_{3,2} & \del_{4,2}\\
0 & 0 & 0 & 0 & \del_{4,3}\\
0 & 0 & 0 & 0 & 0
\end{pmatrix}.
\]
What is more, we will see later on that the homology remains unchanged at each step, and that any free flag anchored on the minimal resolution of its homology can be obtained via a similar process. One way to understand the possibilities for such a flag is thus to understand the possible deformations and obstructions at each step in the process outlined above. Our core question now becomes  

\begin{question}\label{question:MainQuestion}
    What are all of the ways to deform a complex to a free flag? What are the obstructions to deforming, and when do these obstructions vanish?
\end{question}

\subsection{Results} The main results of this paper constitute a complete answer to Question \ref{q:dmsfixedhomology} via a deformation-theoretic approach. We state our results here for differential modules with a given homology, but note that in Section \ref{sec:constructingFreeFlag} we actually prove the following results in slightly more generality.

\begin{theorem} Let $M$ be a module over $S$ with finite minimal free resolution $\bfF$. Every differential module with homology $M$ is quasi-isomorphic to one that can be realized as a deformation of $\bfF$. Furthermore, both the deformations of $\bfF$ and the obstructions to deforming are represented by classes in $\ext^\bullet_S(M,M)$.

\end{theorem}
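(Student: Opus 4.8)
The statement bundles three claims: that every differential module with homology $M$ is quasi-isomorphic to a deformation of $\bfF$, that the deformations are governed by $\ext$-classes, and that the obstructions are $\ext$-classes. For the first, the plan is to lean on the Brown--Erman structure theorem \cite[Theorems~3.2 and 4.2]{brown2021minimal}, which lets us replace an arbitrary differential module with homology $M$ by a quasi-isomorphic free flag anchored on $\bfF$; it then remains to recognize that such flags are exactly the objects produced by the iterative construction of the introduction. This is a matter of unwinding definitions: a flag structure on a free differential module with anchor $\bfF$ is the same datum as a decomposition $\del=\sum_{k\geq 1}\del^{(k)}$ in which $\del^{(1)}$ is the differential of $\bfF$ and $\del^{(k)}$ drops flag-degree by exactly $k$, and since $\del^2=0$ forces $\sum_{a+b=n}\del^{(a)}\del^{(b)}=0$ for every $n$, any such flag arises from the process by taking these $\del^{(k)}$ as the successive choices, and conversely. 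Setting $\del_t:=\sum_{k\geq 1}t^{k-1}\del^{(k)}$ realizes $\del$ as the fiber at $t=1$ of a one-parameter family over $S[t]$; since $\bfF$ is finite the sum is finite and $\del_t^2=0$ in $S[t]$ is equivalent to $\del^2=0$ with no convergence issue, while the spectral sequence of the flag filtration has $E_1$-page $(\bfF,\del^{(1)})$, hence $E_2$-page $H(\bfF)=M$ concentrated in a single filtration degree, and degenerates there for degree reasons, so the homology of every such flag is $M$.

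For the remaining claims the plan is the deformation--obstruction calculus on the dg-algebra $\End(\bfF)=\hom_S(\bfF,\bfF)$, with composition product and differential $d=[\del^{(1)},-]$. Because $\bfF$ is a finite free resolution of $M$, $\End(\bfF)$ computes $\mathbf{R}\hom_S(M,M)$, so $H^k(\End(\bfF))\cong\ext^k_S(M,M)$ in every internal degree. I would run the induction from the introduction: suppose $\del^{(1)},\dots,\del^{(k)}$ have been chosen with $\sum_{a+b=n}\del^{(a)}\del^{(b)}=0$ for all $n\leq k+1$; writing $\mathcal{O}_{k+1}:=\sum_{a+b=k+2,\,2\leq a,b\leq k}\del^{(a)}\del^{(b)}$, the condition on $\del^{(k+1)}$ becomes $d(\del^{(k+1)})=-\mathcal{O}_{k+1}$. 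Two things must be shown: (i) $\mathcal{O}_{k+1}$ is a cocycle, hence defines a class $\omega_{k+1}\in H^{k+2}(\End(\bfF))\cong\ext^{k+2}_S(M,M)$; and (ii) a compatible $\del^{(k+1)}$ exists if and only if $\omega_{k+1}=0$, in which case the choices, modulo the automorphisms $1+t^k\eta$ (for $\eta\in\End^k(\bfF)$) of the deformed module, form a torsor over $H^{k+1}(\End(\bfF))\cong\ext^{k+1}_S(M,M)$. Claim (i) is the standard fact that an obstruction is closed: expand $d(\mathcal{O}_{k+1})$ by Leibniz, rewrite each $d(\del^{(a)})$ via the level-$(a+1)$ relation, and use associativity to watch everything cancel. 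Claim (ii) is the exact sequence relating cocycles, coboundaries and cohomology, together with the computation that conjugation by $1+t^k\eta$ shifts $\del^{(k+1)}$ by the coboundary $d(\eta)$ and otherwise only alters the not-yet-chosen higher maps. Iterating over the finitely many steps, the flag is encoded by a tuple of classes in $\ext^\bullet_S(M,M)$ and the obstruction at each stage lies in $\ext^\bullet_S(M,M)$, as asserted.

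The main obstacle, I expect, is bookkeeping rather than ideas. A differential module carries no homological grading, so to make the square-zero condition literally a Maurer--Cartan-type equation over $\End(\bfF)$ one must twist the $\del^{(k)}$ by signs depending on $k$, and these must be chosen so that $[\del^{(1)},-]$ really is the dg-differential \emph{and} the cancellation in (i) goes through; propagating one consistent convention through the induction while tracking internal degrees---so that each class lands in the intended bigraded summand of $\ext^\bullet_S(M,M)$---is the delicate part. A related point: the raw set of admissible $\del^{(k+1)}$ is only a torsor over \emph{cocycles}, so extracting the clean statement in terms of $\ext$-classes requires quotienting by the relevant automorphisms and verifying this is compatible with the later steps, so that the whole flag---not just one stage---is governed by honest cohomology classes. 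This is precisely the material developed in Section~\ref{sec:constructingFreeFlag}.
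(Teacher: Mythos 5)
Your proposal is correct and follows essentially the same route as the paper: reduce via the Brown--Erman theorem to free flags anchored on $\bfF$, observe (as in Lemma \ref{lem:build-aff} and Proposition \ref{prop:flagsAnchoredOnRes}) that such flags are exactly the outputs of the iterative $S[t]/(t^i)$ construction and have homology $M$, and then identify the stage-$k$ obstruction as a cocycle in $H^{k+2}(\End_S^\bullet(\bfF))\cong\ext^{k+2}_S(M,M)$ and the choices of lift, modulo conjugation by unipotent automorphisms $1+t^k\eta$, with classes in $H^{k+1}\cong\ext^{k+1}_S(M,M)$, which is precisely the content of Lemmas \ref{lem:obstructions}, \ref{lem:param}, and \ref{lem:ext}. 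The bookkeeping concerns you flag (signs, internal degrees, quotienting cocycles by coboundaries via the automorphisms) are exactly the details the paper's Section \ref{sec:constructingFreeFlag} carries out.
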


When we work over an algebraically closed field, this description of the deformations and obstructions yields a simple geometric description of the quasi-isomorphism classes of differential modules with homology $M$.

\begin{theorem}
Assume $\bbk$ is algebraically closed and let $\calX^M_a$ denote the set of degree $a$ differential modules with homology $M$, up to quasi-isomorphism. Then $\calX^M_a$ has the structure of an algebraic variety over $\bbk$ whose dimension is bounded above by
\[
\sum_{i=2}^\ell \dim_\bbk \ext_S^i (M,M)_{a-ia}
\]
and below by
\[
\sum_{i=2}^\ell\dim_\bbk \ext_S^i (M,M)_{a-ia} - \sum_{i=4}^\ell\dim_\bbk \ext_S^i (M,M)_{2a-ia}.
\]   
\end{theorem}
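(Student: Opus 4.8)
The plan is to realize $\calX^M_a$ explicitly as an iterated fibration assembled from the Ext groups in the statement, and then read off the dimension bounds fiber by fiber. First I would invoke the previous theorem and the explicit parameterization of Section~\ref{sec:constructingFreeFlag} to reduce to free flags anchored on the fixed minimal free resolution $\bfF$: every degree $a$ differential module with homology $M$ is quasi-isomorphic to such a flag, and two flags anchored on $\bfF$ are quasi-isomorphic exactly when they lie in a common orbit of the (unipotent) gauge group $U$ of filtered automorphisms of $\bfF$. Writing a flag differential as $\del=\del^{(1)}+\del^{(2)}+\dots+\del^{(\ell)}$, where $\del^{(1)}$ is the fixed resolution differential and $\del^{(i)}\in\hom^i_S(\bfF,\bfF)_{a-ia}$ is the part lowering homological degree by $i$ (a finite-dimensional $\bbk$-vector space), the condition $\del^2=0$ becomes the finite quadratic system
\[
(\star_m)\colon\quad \sum_{k=1}^{m-1}\del^{(k)}\del^{(m-k)}=0,\qquad 2\le m\le\ell,
\]
with the equations for $m>\ell$ vacuous for degree reasons; thus $\calX^M_a=\calR/U$, where $\calR\subseteq\bigoplus_{i=2}^{\ell}\hom^i_S(\bfF,\bfF)_{a-ia}$ is the affine variety cut out by the $(\star_m)$. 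I would also record that the differential of the Hom complex $\hom_S^\bullet(\bfF,\bfF)$ is $d=[\del^{(1)},-]$ and that $H^i(\hom_S^\bullet(\bfF,\bfF))=\ext_S^i(M,M)$, compatibly with internal degree.

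Next I would stratify by how much of the flag has been built. For $1\le n\le\ell$ let $\calX^{(n)}$ be the space of gauge classes of partial data $(\del^{(2)},\dots,\del^{(n)})$ satisfying $(\star_m)$ for all $m\le n+1$; then $\calX^{(1)}$ is a point, $\calX^{(\ell)}=\calX^M_a$, and there are forgetful maps $p_n\colon\calX^{(n)}\to\calX^{(n-1)}$. Isolating the terms of $(\star_{n+1})$ that involve $\del^{(n)}$ rewrites it as $d(\del^{(n)})=\beta_n$, where $\beta_n:=-\sum_{k=2}^{n-1}\del^{(k)}\del^{(n+1-k)}$ depends only on the lower data; the obstruction calculus of Section~\ref{sec:constructingFreeFlag} shows $\beta_n$ is a cocycle whose cohomology class depends only on the gauge class of the lower data, so it defines a morphism of varieties
\[
[\beta_n]\colon\calX^{(n-1)}\longrightarrow\ext_S^{n+1}(M,M)_{a-na}=\ext_S^{n+1}(M,M)_{2a-(n+1)a}.
\]
Over $\calZ_{n-1}:=[\beta_n]^{-1}(0)$ the equation $d(\del^{(n)})=\beta_n$ is solvable, its solution set is a coset of the cocycles $Z^n\subseteq\hom^n_S(\bfF,\bfF)_{a-na}$, and dividing by the residual translation action of the coboundaries $B^n$ exhibits $p_n$ as a fibration over $\calZ_{n-1}$ with fibers torsors over $Z^n/B^n=\ext_S^n(M,M)_{a-na}$, and with empty fibers over the complement. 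Since $\beta_2\equiv0$ (empty sum) and $(\star_{\ell+1})$ is vacuous, $\calZ_1=\calX^{(1)}$ and $p_\ell$ is an $\ext_S^\ell(M,M)_{a-\ell a}$-torsor bundle over all of $\calX^{(\ell-1)}$. Induction on $n$, using that orbits of unipotent groups on affine varieties are closed, equips every $\calX^{(n)}$, hence $\calX^M_a$, with the structure of a variety.

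The bounds then follow by unwinding the recursion $\dim\calX^{(n)}=\dim\calZ_{n-1}+\dim_\bbk\ext_S^n(M,M)_{a-na}$ together with $\dim\calX^{(n-1)}-\dim_\bbk\ext_S^{n+1}(M,M)_{a-na}\le\dim\calZ_{n-1}\le\dim\calX^{(n-1)}$, the left inequality because $\calZ_{n-1}$ is a fiber of a morphism into an affine space of that dimension (for which one works throughout on the irreducible component through the folded resolution $\del^{(2)}=\dots=\del^{(\ell)}=0$, which lies in every $\calZ_{n-1}$). Starting from $\dim\calX^{(1)}=0$, the upper estimates give the asserted upper bound $\sum_{i=2}^{\ell}\dim_\bbk\ext_S^i(M,M)_{a-ia}$. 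For the lower bound, the correction term $\dim_\bbk\ext_S^{n+1}(M,M)_{a-na}$ vanishes at $n=2$ (since $\beta_2\equiv0$) and at $n=\ell$ (since $\hom^{\ell+1}_S(\bfF,\bfF)=0$), so only $3\le n\le\ell-1$ contribute; reindexing by $i=n+1$ and using $a-na=2a-ia$ yields exactly the asserted lower bound $\sum_{i=2}^{\ell}\dim_\bbk\ext_S^i(M,M)_{a-ia}-\sum_{i=4}^{\ell}\dim_\bbk\ext_S^i(M,M)_{2a-ia}$.

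The hard part will be the geometric bookkeeping that turns the deformation-theoretic picture into an honest statement about varieties: checking that $\calR/U$ and each $\calX^{(n)}$ are genuine varieties, that $p_n$ is Zariski-locally trivial over $\calZ_{n-1}$ so that the displayed dimension identity is exact, and that $[\beta_n]$ descends to a regular morphism on the quotient $\calX^{(n-1)}$ rather than living only on the space of partial data --- which is where the gauge-invariance of the obstruction class is essential. I would also flag that neither estimate is sharp in general: the upper bound is attained exactly when every $[\beta_n]$ vanishes identically, and the lower bound exactly when every $[\beta_n]$ is dominant, so for a given $M$ the true dimension can lie strictly between the two.
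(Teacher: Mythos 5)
Your deformation-theoretic skeleton is the same as the paper's: reduce via Brown--Erman (Theorem \ref{thm:danMikeThm}) to free flags anchored on the minimal free resolution $\bfF$, add the layer $\del^{(n)}$ one step at a time, note that the obstruction $\beta_n$ is a cocycle giving a class in $\ext^{n+1}_S(M,M)_{a-na}=\ext^{n+1}_S(M,M)_{2a-(n+1)a}$ while the choices at each stage are governed by $\ext^n_S(M,M)_{a-na}$, and run the fiber-dimension inequality; your internal-degree bookkeeping and the vanishing of the correction terms at $n=2$ and $n=\ell$ reproduce the stated bounds exactly, matching Lemmas \ref{lem:obstructions}--\ref{lem:build-aff} and the dimension count in Theorem \ref{thm:varietyOfIsoClasses}.

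The genuine gap is the step you yourself defer to ``the hard part'': equipping the set of classes with a variety structure by forming the quotient $\calR/U$ (and the strata $\calX^{(n)}$) by the unipotent gauge group. This is precisely the difficulty the paper flags in the introduction and then deliberately avoids: its proof never forms a quotient, but instead works over the truncated rings $S_i=S[t]/(t^i)$ and \emph{defines} $X_{i+1}:=K_i\times E^{i+1}_{a-(i+1)a}$, where $K_i$ is the zero fiber of the obstruction morphism; the legitimacy of this definition rests on Lemma \ref{lem:param} (homotopic lifts are conjugate by an explicit unipotent matrix $P_i$), Lemma \ref{lem:ext}, and Lemma \ref{lem:build-aff} (every anchored flag arises from Construction \ref{const:aff}). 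In your formulation, three assertions are left unproved and none is routine: (i) that quasi-isomorphism of flags anchored on $\bfF$ coincides with lying in a common $U$-orbit --- an isomorphism of differential modules between two such flags need not be filtered a priori; (ii) that $\calR/U$ and each $\calX^{(n)}$ are varieties --- closedness of unipotent orbits does not by itself produce a quotient variety, and this is exactly the ``no indication, a priori'' problem the paper mentions; and (iii) that the fiber of $p_n$ over a point of $\calZ_{n-1}$ is a torsor under $Z^n/B^n$ with $p_n$ Zariski-locally trivial --- the stabilizer in $U$ of the lower-order data could identify lifts whose difference is not a coboundary, so the fiber could a priori be a proper quotient of $\ext^n_S(M,M)_{a-na}$, which would invalidate the exact recursion you use for the lower bound. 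Either supply these arguments or, as the paper does, build the parameter space iteratively out of the Ext affine spaces themselves rather than as a quotient.
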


\begin{corollary}
Assume $\bbk$ is algebraically closed and $M$ is any $S$-module with minimal free resolution $\bfF$. If $$\ext^i_S(M,M)_{a-ia} = 0$$ for all $2\leq i\leq\pdim(M)$, then every degree $a$ differential module with homology $M$ is quasi-isomorphic to $\fold^a(\bfF)$ (in other words, $M$ is \emph{a-rigid}; see Definition \ref{def:aRigid}).
\end{corollary}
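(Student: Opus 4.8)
The plan is to deduce the corollary directly from the dimension estimate of the second theorem above, together with the structure of the deformation-theoretic construction of Section \ref{sec:constructingFreeFlag}. Under the stated hypothesis every summand $\ext_S^i(M,M)_{a-ia}$ with $2\le i\le \ell=\pdim(M)$ vanishes, so the upper bound
\[
\dim \calX_a^M \le \sum_{i=2}^\ell \dim_\bbk \ext_S^i(M,M)_{a-ia}
\]
equals $0$, and hence $\calX_a^M$ is a $0$-dimensional variety. To upgrade this to ``a single point'' I would trace through how that bound is produced: the construction realizes every deformation of $\bfF$ via the iterative procedure sketched in the introduction, where the new maps $\del_{\bullet,\bullet-i}$ introduced at the $i$-th stage are, up to the equivalence that descends to quasi-isomorphism, parameterized by (a constructible subset of a quotient of) the affine space $\ext_S^i(M,M)_{a-ia}$, with the undeformed choice $\del_{\bullet,\bullet-i}=0$ at every stage corresponding to the origin. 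Thus $\calX_a^M$ is realized as a constructible subset of $\bigoplus_{i=2}^\ell \ext_S^i(M,M)_{a-ia}$ that contains the class of $\fold^a(\bfF)$ at the origin.

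With this in hand the argument finishes formally. By the first theorem above, every degree $a$ differential module with homology $M$ is quasi-isomorphic to some deformation of $\bfF$ and hence determines a point of $\calX_a^M$. But the ambient space $\bigoplus_{i=2}^\ell \ext_S^i(M,M)_{a-ia}$ is now the single point $0=[\fold^a(\bfF)]$, so $\calX_a^M=\{[\fold^a(\bfF)]\}$. Therefore every degree $a$ differential module with homology $M$ is quasi-isomorphic to $\fold^a(\bfF)$, which is exactly the statement that $M$ is $a$-rigid in the sense of Definition \ref{def:aRigid}.

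The one place where the argument is more than a formal consequence of the second theorem, and the step I expect to require the most care, is the identification in the first paragraph: one must know that the parameter recorded at stage $i$ of the construction is genuinely governed by $\ext_S^i(M,M)_{a-ia}$ (not merely bounded in dimension by it), and that the equivalence relation under which this holds is no coarser than quasi-isomorphism, so that uniqueness in the parameter space forces uniqueness up to quasi-isomorphism. Since both of these are established in the course of proving the two theorems above, I would present only the short deduction here with a pointer back to the relevant step of Section \ref{sec:constructingFreeFlag}. Note also that the obstruction groups $\ext_S^i(M,M)_{2a-ia}$ are irrelevant in this regime: when no nonzero deformation is available at any stage there is nothing to obstruct, consistent with the lower bound for $\dim\calX_a^M$ also being $0$.
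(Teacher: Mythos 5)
Your argument is correct, but it takes a noticeably shorter route than the paper does for the key step. The paper deduces this corollary from Corollary \ref{cor:theMFRcase}: the vanishing hypothesis makes the upper bound in Theorem \ref{thm:varietyOfIsoClasses} zero, so $\dim\calX^M_a=0$, and then it invokes the ``Moreover'' clause of Theorem \ref{thm:varietyOfIsoClasses} --- whose proof is the genuinely nontrivial part, a rescaling argument (replacing $\delta_i$ by $\lambda^{i-1}\delta_i$ for generic $\lambda\in\bbk^\times$) showing that a zero-dimensional $\calX^\bfF_a$ cannot contain any class other than $\fold^a(\bfF)$. You bypass that clause entirely: since your hypothesis is exactly that every parameter space $\ext^i_S(M,M)_{a-ia}$, $2\le i\le\ell$, is zero, the ambient space $X_1\times E^2_{-a}\times\cdots\times E^\ell_{a-\ell a}$ containing $\calX^M_a$ is literally a single point, so no dimension-zero-implies-singleton argument (and hence no algebraic-closure-driven rescaling trick) is needed; equivalently, at each stage of Construction \ref{const:aff} the vanishing of $H^{i+1}(\End^\bullet_S(\bfF))_{a-(i+1)a}$ forces every lift to be isomorphic to the trivial one by Lemmas \ref{lem:param} and \ref{lem:ext}, and Lemma \ref{lem:build-aff} plus Theorem \ref{thm:danMikeThm} give that every differential module with homology $M$ is reached this way. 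You correctly identify the one point needing care --- that stage-$i$ lifts are genuinely parameterized up to isomorphism by the Ext group (not merely dimension-bounded by it), and that this identification is compatible with quasi-isomorphism --- and these are indeed supplied by the cited lemmas. The trade-off: the paper's route yields the stronger, reusable statement that $\dim\calX^M_a=0$ alone (even with some nonvanishing Ext groups) forces rigidity, while your route is more elementary and works verbatim without the single-point characterization, at the cost of only covering the case where the parameter spaces themselves vanish.
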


Note that it is easy to write down a set of equations that cuts out the set of all free flags anchored on a fixed complex using only the condition that the differential squares to zero, but it is not at all clear how to write down a set of equations that also accounts for isomorphism class. Indeed, if we were to naively take the variety cut out by these equations and try to just mod out by isomorphism of differential modules, there is absolutely no indication, a priori, that the resulting object would be a variety. Our approach sidesteps this particular challenge by constructing $\calX^M_a$ iteratively as a sequence of deformations, then showing that the isomorphism class of two such deformations at each step is controlled by higher Ext classes of the module $M$. 

Our approach also yields several applications, one of which is that, for all but finitely many values of $a$, the study of degree $a$ differential modules with finite length homology reduces to the study of minimal free resolutions of finite length $S$-modules. In particular:

\begin{theorem}
    Let $D$ be a free degree $a$ differential module with finite length homology $M$. Then for all $|a|\gg 0$, $D$ is quasi-isomorphic to the minimal free resolution of $M$.
\end{theorem}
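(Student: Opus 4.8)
The plan is to reduce the statement to the vanishing of the graded Ext groups that control the deformations, using the second displayed theorem and its corollary. The key point is that when $M$ has finite length, $\pdim(M) = \ell$ equals the number of variables $n$ (by Auslander–Buchsbaum, since $\depth M = 0$), so the relevant Ext groups $\ext^i_S(M,M)$ for $2 \le i \le \ell$ are all finitely generated graded modules of finite length — in particular each is nonzero in only finitely many internal degrees. I would first record that $\fold^a(\bfF)$ is precisely the folding of the minimal free resolution of $M$, so the corollary already tells us that $M$ is $a$-rigid as soon as $\ext^i_S(M,M)_{a-ia} = 0$ for all $2 \le i \le \ell$; the task is thus to show this vanishing holds for all $|a| \gg 0$.

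Next I would analyze the grading. For each fixed $i$ with $2 \le i \le \ell$, the graded module $\ext^i_S(M,M)$ is a subquotient of $\hom_S(F_i, F_0) \cong$ a finite direct sum of twists $S(-d)$, hence is supported in a finite set of internal degrees $\{d : \ext^i_S(M,M)_d \ne 0\}$, say contained in $[m_i, M_i]$. The twist appearing in the bound is $a - ia = (1-i)a$; since $i \ge 2$ we have $1 - i \le -1 \ne 0$, so as $|a| \to \infty$ the integer $(1-i)a$ leaves every bounded interval. Concretely, $(1-i)a \notin [m_i, M_i]$ whenever $|a| > \max(|m_i|, |M_i|)/(i-1)$. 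Taking the maximum of these finitely many thresholds over $i = 2, \dots, \ell$ gives a single bound $N$ such that for all $|a| > N$ we have $\ext^i_S(M,M)_{a-ia} = 0$ for every $i$ in range. Applying the corollary then yields that every degree $a$ differential module with homology $M$ is quasi-isomorphic to $\fold^a(\bfF)$, i.e.\ to the (folded) minimal free resolution of $M$, which is the claim.

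The main obstacle — really the only substantive point — is making sure the finite-length hypothesis genuinely forces every $\ext^i_S(M,M)$ in the relevant range to be internal-degree-bounded, and that the bound is uniform over the finitely many $i \le \pdim(M) = n$. This is where finite length is essential: it guarantees $\pdim M$ is finite (so the sum is finite) and, more importantly, that each $\ext^i_S(M,M)$ is a finitely generated graded $S$-module which is itself of finite length (being annihilated by a power of $\m$ since $\Ann M \supseteq \m^k$), hence supported in finitely many degrees. One should double-check the edge case where some $\ext^i_S(M,M)$ vanishes entirely, which only makes the vanishing easier, and the normalization of which twist $(1-i)a$ or $(i-1)a$ appears — but either way the coefficient $1-i$ is nonzero for $i \ge 2$, so the argument is insensitive to sign conventions. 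I would also remark that the theorem is sharp in spirit: the finitely many exceptional $a$ are exactly those for which some $(1-i)a$ lands in the support of $\ext^i_S(M,M)$, and these can genuinely be nontrivial, consistent with Example~1.4.
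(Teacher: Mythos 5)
Your proposal is correct and takes essentially the same route as the paper: the paper's proof likewise observes that the isomorphism classes of free flags anchored on the minimal free resolution are controlled by the cohomology of the endomorphism complex, i.e. by the groups $\ext^i_S(M,M)_{a-ia}$, and that finite length of $M$ forces these graded modules to be concentrated in finitely many internal degrees, so the shifts $(1-i)a$ with $i \geq 2$ miss the support once $|a| \gg 0$. The only difference is cosmetic: you pass through the $a$-rigidity corollary stated under an algebraic-closure hypothesis, whereas the paper's version of this asymptotic statement needs no such assumption because the underlying lemmas parameterizing lifts and obstructions are independent of the base field.
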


We prove this, as well as other applications and examples, in Section \ref{sec:Applications}. More generally, we enumerate a list of conditions that is sufficient for the space parameterizing the isomorphism classes of free flags anchored on a fixed complex to consist of a single point, and in the process recover (and generalize) previous results proved in \cite{banks2022differential} with a completely different method. Finally, we discuss some applications of our work to rank conjectures for differential modules. We give some results about the existence of free flag differential modules whose total Betti number is strictly smaller than the sum of the Betti numbers of the homology; as it turns out, our characterization of the ``deformation" terms gives a straightforward criterion in special cases for checking whether a differential module with unexpectedly small Betti numbers exists.

The paper is organized as follows. In Section \ref{sec:backgroundFlags}, we establish the necessary background and conventions on (free flag) differential modules that will be essential for the rest of the paper. In Section \ref{sec:constructingFreeFlag} we consider an iterative process of constructing free flags with a fixed anchor and connect this process to the (non)triviality of certain cohomology classes of the associated endomorphism complex. After building up the necessary machinery, we prove our main result of the paper, Theorem \ref{thm:varietyOfIsoClasses}, and note some interesting consequences for the derived category of differential modules. In Section \ref{sec:Applications}, we apply the results established in Section \ref{sec:constructingFreeFlag} to understand multiple different aspects of free flag differential modules, including rigidity, Betti deficiency, and the connection between higher structure maps and the notion of systems of higher homotopies for matrix factorizations.

\section*{Acknowledgments}

We thank Daniel Erman and Michael Brown for many helpful discussions regarding this work. We further thank Daniel Erman for his comments and suggestions on earlier drafts of this paper. VandeBogert acknowledges the support of the National Science Foundation Grant DMS-2202871.

\section{Anchored free flags}\label{sec:backgroundFlags}

In this section, we introduce some background and notation on differential modules that will be used throughout the paper. From now on, unless otherwise specified, the notation $S$ will denote any Noetherian graded-local ring, by which we mean an $\bbN$-graded ring (possibly concentrated in degree 0) whose degree 0 component is local with residue field $\bbk$. All modules considered throughout this paper will be finitely generated.

Let us first recall the notion of a \emph{homogeneous} differential module:

\begin{definition}\label{def:diffMod}
A \emph{differential module} $(D,d)$ or $(D,d^D)$ is an $S$-module $D$ equipped with an $S$-endomorphism $d = d^D \colon D \to D$ that squares to $0$. A differential module is graded of (internal) degree $a$ if $D$ is equipped with a grading over $S$ such that $d \colon D \to D(a)$ is a graded map. The category of degree $\mathbf{a}$ differential $S$-modules will be denoted $\dm (S,a)$.

The \emph{homology} of a differential module $(D,d)$ is defined to be $\ker (d) / \im (d)$. If $D$ is graded of degree $a$, then the homology is defined to be the quotient $\ker(d) / \im(d(- a))$.

A differential module is \emph{free} if the underlying module $D$ is a free $S$-module, and  $D$ is \emph{minimal} if $d \otimes_S \bbk = 0$ (that is, the differential module $D$ is minimal if its squarezero endomorphism is minimal).

A morphism of differential modules $\phi \colon (D,d^D) \to (D', d^{D'})$ is a morphism of $S$-modules $D \to D'$ satisfying $d^{D'} \circ \phi = \phi \circ d^D$. Notice that morphisms of differential modules induce well-defined maps on homology in an identical fashion to the case of complexes. A morphism of differential modules is a \emph{quasi-isomorphism} if the induced map on homology is a quasi-isomorphism.
\end{definition}

\begin{example}
    Every homogeneous complex of $S$-modules may be viewed as a degree $0$ differential module.
\end{example}

\begin{remark}
    Differential modules are essentially the homologically ungraded analog of complexes, and as such they can vary much more wildly than complexes. The useful analogy here is the difference between graded rings versus ungraded rings: a differential $S$-module is equivalently a $S[t]/(t^2)$-module; on the other hand, the ring $S[t]/(t^2)$ may be viewed as a $\bbz$-graded ring by assigning $\deg (t) = 1$. Complexes are thus equivalently described as $\bbz$-graded $S[t]/(t^2)$-modules with respect to this grading, and homogeneous complexes are equivalently graded $S[t] / (t^2)$-modules where $t$ is assigned the bidegree $(1,0)$ (the first component denotes homological degree, and the second component internal degree).
\end{remark}

\begin{definition}
    Let $S$ be a (positively) graded ring. The \emph{degree $a$ fold} is the functor:
    \begin{align*}
\fold_a\colon &\ch(S)\to \DM(S,a)\\
& (C_\bullet, \del)\mapsto \left(\bigoplus C_i(ia), \del \right),
\end{align*}
where in the above $\ch (S)$ is the category of homogeneous complexes of degree $0$.
\end{definition}

\begin{remark}\label{rem:embeddingCh(S)}
    Notice that by construction the degree $a$ fold of a homogeneous complex yields a differential module of degree $a$. In particular, the category of homogeneous chain complexes of $S$-modules not only embeds into the category of differential modules, but it also embeds into $\DM (S,a)$ for all $a \in \bbz$. On the other hand, there is no clear relation between the categories $\DM (S,a)$ and $\DM (S,b)$ for different values of $a$ and $b$; this is at least indicated by the fact that for a fixed module $M$, the Betti numbers of a differential module $D \in \DM (S,a)$ with $H(D) = M$ may change as $a$ varies (see \cite[Proposition 3.8]{banks2022differential}). 
\end{remark}

Next, we define the notion of a \emph{free flag}. These can be thought of as a useful intermediary between chain complexes of free $S$-modules and general differential modules, and as we shall see are an important ingredient for generalizing free resolutions to the category of differential modules.

\begin{definition}\label{def:flagGrading}
Let $D$ be a differential module. Then $D$ is a \emph{free flag} if $D$ admits a splitting $D = \bigoplus_{i \in \bbz} F_i$, where each $F_i$ is a free $S$-module, $F_i = 0$ for $i \leq 0$, and $d_D (F_i) \subseteq \bigoplus_{j < i} F_j$. 

Every free flag is naturally filtered by its flag grading $D^0 = D_0 \subset D^1 \subset D^2 \subset \cdots$ by defining 
$$D^i := \bigoplus_{j \leq i} F_j.$$
The spectral sequence associated to this filtration has $E^0$-page given by a complex with differentials $d^F_i : D^i / D^{i-1} = F_i \to D^{i-1}/D^{i-2} = F_{i-1}$. This complex is called the \emph{anchor} of the free flag $D$.
\end{definition}

\begin{remark}
The squarezero endomorphism of the free flag may be represented as a strictly upper triangular block matrix, where the first off-diagonal blocks list the differentials of the anchor.
\end{remark}

The next proposition shows that ``deforming" a resolution does not change homology, a fact that we will take advantage of later when studying quasi-isomorphism classes of differential modules.

\begin{proposition}\label{prop:flagsAnchoredOnRes}
Let $D$ be a free flag and assume that the anchor of $D$ is a free resolution of some module $M$. Then $H(D) \cong M$. 
\end{proposition}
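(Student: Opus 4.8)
The plan is to read $H(D)$ off the spectral sequence of the flag filtration $D^0\subseteq D^1\subseteq\cdots$ introduced in Definition \ref{def:flagGrading}.

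First I would observe that this filtration is \emph{bounded}: since $D=\bigoplus_i F_i$ is a finitely generated $S$-module, only finitely many $F_i$ are nonzero, so there is an $N$ with $D^{-1}=0$ and $D^N=D$. Hence the associated spectral sequence converges, abutting to $H(D)$, with $E^\infty_p$ identified with the graded pieces $F^pH(D)/F^{p-1}H(D)$ of the induced (exhaustive, bounded) filtration $F^pH(D)=\im\bigl(H(D^p)\to H(D)\bigr)$ on homology.

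Next I would identify the relevant page. Because $d_D(F_i)\subseteq\bigoplus_{j<i}F_j=D^{i-1}$, the differential $d_D$ strictly decreases the flag filtration, so the associated graded differential module $\bigoplus_i F_i$ carries the zero differential and the first nonzero induced differential is exactly the anchor differential $d^F_i\colon F_i\to F_{i-1}$. In other words the anchor complex occurs as a page of the spectral sequence (the $E^0$-page in the bookkeeping of Definition \ref{def:flagGrading}), so the next page is its homology. By hypothesis the anchor resolves $M$, so this page is $M$ concentrated in filtration degree $0$ and zero elsewhere. The spectral sequence then degenerates there: every later differential has source or target a subquotient of some $E^r_p$ with $p\neq 0$, hence vanishes, so $E^\infty$ agrees with this page. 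Thus $E^\infty$ is concentrated in filtration degree $0$, where it equals $M$; since $F^{-1}H(D)=0$ this forces $F^0H(D)=M$ and $F^pH(D)=F^{p-1}H(D)$ for $p\geq 1$, and exhaustiveness gives $H(D)=F^NH(D)=F^0H(D)\cong M$. Note that no extension problem intervenes precisely because $E^\infty$ lives in a single filtration degree.

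The only genuinely delicate point—and the place to be careful—is convergence: one must confirm that the standing finite-generation hypothesis really does make the flag filtration bounded, so that the abutment is legitimately $H(D)$, and then match indexing conventions so that ``the anchor resolves $M$'' translates correctly into ``the page after the anchor is $M$ in filtration degree $0$ and zero elsewhere.'' Everything else is formal. A spectral-sequence-free alternative, which I expect to have more bookkeeping overhead, is to induct on the flag length $N$: split off the sub-differential-module $F_0=D^0$ (which has zero differential), feed $0\to F_0\to D\to D/F_0\to 0$ into the three-periodic homology exact sequence for differential modules, identify $H(D/F_0)$ with the first syzygy $\Omega^1 M$ by induction (its anchor being the truncated resolution), and check that the connecting map $\Omega^1 M\to F_0$ is the syzygy inclusion—this is the same argument unwound one filtration step at a time.
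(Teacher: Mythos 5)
Your proof is correct and follows essentially the same route as the paper: both run the spectral sequence of the flag filtration and observe that it degenerates at the page computing the homology of the anchor, which is $M$ concentrated in filtration degree $0$. The only difference is bookkeeping of convergence: the paper avoids any boundedness claim by writing $H(D)=\bigcup_{i\geq 0}\Ima\bigl(H(D^i)\to H(D)\bigr)$ and reading the stabilization $H(D)^0=H_0(F)$, $H(D)^i=H(D)^{i-1}$ straight from the degeneration, whereas you invoke the standing finite-generation hypothesis to make the filtration bounded, which is equally valid under the paper's conventions.
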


\begin{proof}
Define
$$H ( D )^i := \Ima \left( H (D^i) \to H(D) \right).$$
By definition there is an equality
$$H(D) = \bigcup_{i \geq 0} H( D )^i$$
and by \cite[Section 2.6]{avramov2007class} a spectral sequence
$$E^1_i = H_i (F) \implies H(D ),$$
where $F$ denotes the underlying anchor of $D$. Since $E^1_i = 0$ for $i \neq 0$ by assumption, the spectral sequence degenerates on the first page and there are equalities
$$H (D )^0 = H_0 (F), \quad H (D )^i = H (D )^{i-1} \quad \textrm{for} \ i >0.$$
It follows that $H(D ) = H_0 (F) = M$.
\end{proof}

\begin{remark}
More generally, the above shows that if a free flag $D$ is anchored on a complex with finite length homology, then $H(D)$ is a module of finite length (really, it suffices for any page of the associated spectral sequence to eventually have finite length homology; this is pointed out in \cite{avramov2007class}).
\end{remark}

To finish this section, we state a fundamental result of Brown-Erman. One way to interpret this statement is that if one is willing to work up to quasi-isomorphism, then the homological theory of differential modules is anchored on the classical theory of free resolutions of modules.

\begin{theorem}[{\cite[Theorem 3.2]{brown2021minimal}}]\label{thm:danMikeThm}
Let $D$ be a differential module and $(F_\bullet , d) \to H(D)$ a minimal free resolution of $H(D)$. Then $D$ admits a free flag resolution anchored on the complex $F$.
\end{theorem}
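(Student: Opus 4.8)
The plan is to build the resolution by hand, constructing the free flag one homological degree at a time together with its comparison map to $D$. Write $F\colon \cdots \to F_2\xrightarrow{d_2}F_1\xrightarrow{d_1}F_0$ for the given minimal free resolution of $M:=H(D)$, with augmentation $\varepsilon_M\colon F_0\twoheadrightarrow M$. The target is a free flag $G$ with underlying module $\bigoplus_{i\ge 0}F_i$ whose differential is a strictly upper-triangular block matrix $d^G=(\partial_{i,j})_{j<i}$ with first off-diagonal blocks $\partial_{i,i-1}=d_i$ (so that, by definition, $G$ is anchored on $F$), together with a morphism of differential modules $\varepsilon\colon G\to D$ with components $\varepsilon_i\colon F_i\to D$. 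Both $d^G$ and $\varepsilon$ are produced by induction on $i$.

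The base case lifts a minimal generating set of $M=\ker(d^D)/\im(d^D)$ to cycles of $D$, giving $\varepsilon_0\colon F_0\to\ker(d^D)\subseteq D$ whose composite with the projection $\ker(d^D)\to H(D)$ is $\varepsilon_M$. In the inductive step we assume $d^G$ and $\varepsilon$ have been built on $G^{(i-1)}:=\bigoplus_{j\le i-1}F_j$ with $(d^G)^2=0$ and $d^D\varepsilon=\varepsilon d^G$ there, and we must choose the blocks $\partial_{i,i-2},\partial_{i,i-3},\dots,\partial_{i,0}$ and the map $\varepsilon_i$. The blocks $\partial_{i,k+1}$, chosen for $k$ running down from $i-3$ to $0$, are each forced by the requirement that the $F_k$-component of $(d^G)^2|_{F_i}$ vanish; this is an equation of the shape $d_{k+1}\circ\partial_{i,k+1}=-\sum_{k+1<j<i}\partial_{j,k}\circ\partial_{i,j}$, and one checks — using the square-zero relations already in force on $G^{(i-1)}$ together with the lower-index components of $(d^G)^2|_{F_i}$ already arranged to vanish — that the right-hand side lands in $\ker d_k$ (in $\ker\varepsilon_M$ when $k=0$), hence in $\im d_{k+1}$ by exactness of $F$, so projectivity of $F_i$ lets us lift it to define $\partial_{i,k+1}$. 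The block $\partial_{i,0}$ does not affect $(d^G)^2|_{F_i}$; it is instead chosen so that the map $\eta:=\sum_{1\le j<i}\varepsilon_j\circ\partial_{i,j}\colon F_i\to D$, which is a cycle of $D$, becomes a boundary after adding $\varepsilon_0\circ\partial_{i,0}$ — possible because $\varepsilon_0$ descends to the surjection $\varepsilon_M$ and $F_i$ is free — and then $\varepsilon_i$ is obtained by lifting $\sum_{j<i}\varepsilon_j\circ\partial_{i,j}$ through $d^D$.

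The delicate point, and the main obstacle I anticipate, is the $k=0$ instance of the square-zero equation: there is no map ``$d_0$'', so vanishing of the $F_0$-component of $(d^G)^2|_{F_i}$ must be deduced modulo $\im d_1$, i.e.\ after applying $\varepsilon_M$. This forces the inductive package to carry an extra invariant — the compatibility $\varepsilon_M\circ\partial_{j,0}=-\overline{\sum_{1\le l<j}\varepsilon_l\partial_{j,l}}$ between the bottom-row blocks and the comparison maps built at earlier stages (the bar denoting the induced map into $H(D)$) — and checking that this invariant persists at stage $i$ while every other condition stays consistent is the real bookkeeping work. Once $G$ and $\varepsilon\colon G\to D$ exist, Proposition~\ref{prop:flagsAnchoredOnRes} gives $H(G)\cong M$; tracing through its proof, $H(G)$ is the image of $H(F_0)=F_0$ in $H(G)$, which $\varepsilon$ carries onto $H(D)$ via $\varepsilon_M$, so $\varepsilon$ induces an isomorphism on homology and is the desired free flag resolution (minimal as well, if the lifts above are chosen homogeneously, since $F$ is minimal).
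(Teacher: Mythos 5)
The paper does not actually prove this statement: it is quoted from Brown--Erman, so there is no internal proof to compare against; your inductive construction is essentially the standard argument (and essentially the one in \cite{brown2021minimal}). Your plan is correct, and the ``delicate point'' you isolate resolves exactly as you predict --- in fact the extra invariant you propose to carry costs nothing, because it is automatic from the inductive package you already have: since $\varepsilon$ is a morphism of differential modules on $G^{(i-1)}$, one has $\varepsilon_0\partial_{j,0}+\sum_{1\le l<j}\varepsilon_l\partial_{j,l}=d^D\varepsilon_j$, a boundary, and since $\varepsilon_0$ takes values in cycles and lifts $\varepsilon_M$, passing to homology gives $\varepsilon_M\partial_{j,0}=-\overline{\sum_{1\le l<j}\varepsilon_l\partial_{j,l}}$ for all $j\le i-1$. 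Feeding this into the $k=0$ equation, $\varepsilon_M\bigl(\sum_{1<j<i}\partial_{j,0}\partial_{i,j}\bigr)=-\overline{\sum_{l\ge 1}\varepsilon_l\bigl(\sum_{l<j<i}\partial_{j,l}\partial_{i,j}\bigr)}=0$, because each inner sum is the $F_l$-component ($l\ge 1$) of $(d^G)^2|_{F_i}$, which your earlier choices of $\partial_{i,i-2},\dots,\partial_{i,2}$ have already made vanish; so the obstruction lies in $\ker\varepsilon_M=\im d_1$ and the induction closes. (Minor slip: at stage $k$ the components of $(d^G)^2|_{F_i}$ you may invoke are those with target $F_l$ for $l>k$, not ``lower-index'' ones.) The endgame is also fine: $H(\varepsilon)$ is surjective since every class of $H(D)$ is hit by a cycle in $F_0$, and a surjective endomorphism of the finitely generated module $M$ over the Noetherian ring $S$ is an isomorphism.

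One genuine error, though it is outside the statement being proved: your closing parenthetical that the resulting flag is minimal is false in general. The higher blocks $\partial_{i,j}$, $j\le i-2$, can be forced to contain units no matter how the lifts are chosen (this is exactly the Betti-deficiency phenomenon discussed in Section \ref{sec:Applications}); minimality of a free resolution of $D$ requires the separate minimization/splitting step of Brown--Erman, not just minimality of the anchor $\bfF$.
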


\begin{notation}
    For convenience, we write $\delta_j$ for the map $F\to F$ that, when restricted to $F_i$ is equal to $\del_{i,i-j}$ for each $i$ (in general, we assume that $F_i = 0$ for $i<0$ or $i>\ell$ so any maps to or from such $F_i$ are defined to be the zero map).
\end{notation}

\begin{remark}\label{rmk:degree}
    Suppose that $\bfF_\bullet$ is a graded complex. Let $\bfF^{a}_\bullet$ be the complex obtained by twisting $F_i$ by $ia$ so that all maps are homogeneous of degree $a$. A free flag anchored on $\bfF^{a}_\bullet$ has underlying module $\bigoplus_i F_i(ai)$ and differential $\del$ of degree $a$. For each $i>j$, $\del$ restricts to a degree $a$ map $\del_{i,j}: F_i(ia)\to F_j(ja)$. It will sometimes be helpful to remember that a degree $a$ map $F_i(ia)\to F_j(ja)$ is the equivalently a degree $a(j-i+1) $ map $F_i\to F_j$.
\end{remark}

\section{Parameterizing Anchored Free Flags}\label{sec:constructingFreeFlag}

\subsection{Building Free flags}\label{subsec:buildFreeFlag}

We describe a process for building an anchored free flag with a fixed homology $M$ and prove that all free flags anchored on the minimal free resolution of $M$ arise in this way.

To begin, we introduce the first main character of this construction, the \emph{endomorphism complex}. We will find that all of the higher differentials of a free flag (and their obstructions) may be reinterpreted as elements of this complex.

\begin{definition}
    Let $\bfF$ be a complex of $S$-modules (indexed homologically). The \emph{endomorphism complex} $\End_S^\bullet ( \bfF)$ is the (cochain) complex with
    $$\End_S^i (\bfF ) := \prod_{n \in \bbz} \hom_S (\bfF_n , \bfF_{n-i}), \quad \textrm{and differential}$$
    $$d^{\End_S^\bullet ( \bfF)} (\phi) := d^\bfF \circ \phi - (-1)^{|\phi|} \phi \circ d^{\bfF}.$$
    Equipped with function composition, the complex $\End_S^\bullet ( \bfF)$ is naturally an associative DG-algebra. It is useful to note that
    $$Z^i (\End_S^\bullet (  \bfF)) = \hom_{\ch (S)} (\bfF , \bfF [-i]),$$
    $$B^i (\End_S^\bullet (  \bfF)) = \textrm{nullhomotopies} \ \bfF \to \bfF [-i].$$
    If $\bfF$ is a free resolution of some $S$-module $M$, there is an isomorphism
    $$H^i (\End_S^\bullet ( \bfF)) = \ext^i_S (M,M).$$
\end{definition}

The following notation will be tacitly used for the remainder of this section:

\begin{notation}
Let $t$ denote any arbitrary indeterminate. The notation $S_i$ will denote the ring
    $$S_i := \frac{S[t]}{(t^i)}.$$
\end{notation}

The following construction seeks to make precise the idea of iteratively deforming a complex $\bfF$ to obtain a free flag anchored on $\bfF$. In doing this construction, one needs to work in the \emph{truncated} polynomial ring $S_i$ where $i$ increases at each step, since this will allow us to guarantee that the resulting differential module still has squarezero endomorphism.

\begin{construction}\label{const:aff}
Let $\bfF$ be any homogeneous complex of free $S$-modules
\[
\bfF : \quad  F_0\xleftarrow{\del_{1,0}} F_1 \xleftarrow{\del_{2,1}}\cdots \xleftarrow{\del_{\ell, \ell-1}}F_\ell
\]
with $(F, d_0) = \fold_a(\bfF)$.

One can iteratively build a free flag $(F,\del)$ anchored on $\bfF$ by filling in the matrix $\del$ diagonal by diagonal, at the $i^{th}$ step lifting from a differential module over $S_{i}$ to one over $S_{i+1}$.

Suppose we have built up a differential module $(F\otimes S_i, d_{i-1})\in \DM(S_i,a)$ anchored on $\bfF\otimes S_i$ where 
\[
d_{i-1} = \begin{pmatrix}
0 & \del_{1,0} & t\del_{2,0} & \cdots & t^{i-1}\del_{i, 0} & 0 & \cdots & 0\\
0 & 0 & \del_{2,1} & t\del_{3,1} & \cdots & t^{i-1}\del_{i+1, 1} & \cdots & 0\\
0 & 0 & 0 & \del_{3,2} & t\del_{4,2}& \cdots & \ddots & 0\\
0 & 0 & 0 & 0 & \del_{4,3} & \ddots & \cdots & \vdots\\
\vdots & \vdots & \vdots& \vdots & \ddots & \ddots & \ddots & \vdots\\
0 & 0 & 0 & 0 & 0 & \cdots  & \del_{\ell-1, \ell-2} & t\del_{\ell, \ell-2}\\
0 & 0 & 0 & 0 & 0 & \cdots & 0 & \del_{\ell,\ell-1}\\
0 & 0 & 0 & 0 & 0 & \cdots & 0 & 0
\end{pmatrix}
\]
If $(F\otimes_S S_i, d_{i-1})$ can be lifted to a differential module $(F\otimes_S S_{i+1}, d_i)$, we then lift by adding maps $\{t^i\del_{j+i+1, j}\}_{j=0}^{\ell-i-1}$ to $d_{i-1}$ above (adding on the next off-diagonal after the $t^{i-1}$ terms). By the $\ell^{th}$ step we have a differential module $(F\otimes_S S_\ell, d_{\ell-1})\in \DM(S_\ell, a)$. By setting $t=1$, we obtain a differential module in $(F, \del)\in \DM(S,a)$ anchored on $\bfF^a$.
\end{construction}

\begin{remark}
    With notation as in the above construction, assign the variable $t$ a homological grading by setting $\deg (t) = 1$ (and hence cohomological degree $-1$). The maps $\delta_j$ are a priori elements of $\End^j ( \bfF)$, and the process of rescaling yields the map $t^{j-1} \delta_j \in \End^1_{S_i} ( \bfF \otimes_{S} S_i)$. In other words, this rescaling process forces all of the maps to be homogeneous of cohomological degree $1$ and thus induces a $\bbz$-grading on the differential module $(F \otimes_S S_i , d_{i-1})$. This $\bbz$-grading in fact agrees with the $\bbz$-grading induced by the flag grading, but the introduction of the $t$-variable makes the bookkeeping much simpler. Similar rescaling tricks have also been used to induce $\bbz$-gradings on $\bbz / 2 \bbz$-graded complexes (see for instance the discussion on page 2182 of \cite{brown2017adams}), and is intimately related to the idea of ``unfolding" a $\bbz / 2 \bbz$-graded object.
\end{remark}

The following lemma gives an explicit obstruction to the iterative process outlined in Construction \ref{const:aff}, and these obstructions may be reformulated as nontrivial cohomology classes of the endomorphism complex:

    \begin{lemma}\label{lem:obstructions}
    The free flag $(F\otimes S_i, d_{i-1})$ can be lifted to a free flag $(F\otimes S_{i+1}, d_i)$ by adding maps $\{t^i\del_{j+i+1, j}\}_{j=0}^{\ell-i-1}$ if and only if a certain map $\bfF\to \bfF[-i-2]$ is nullhomotopic (in other words, a certain class in $H^{i+2}(\End_S^\bullet (\bfF))$ is 0).
    \end{lemma}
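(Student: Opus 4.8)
The plan is to make the obstruction of Construction~\ref{const:aff} explicit: expand the square of the perturbed differential $d_i = d_{i-1}+t^i\delta_{i+1}$ over $S_{i+1}$, read off the lowest power of $t$ whose coefficient is not forced to vanish, and recognize that coefficient as a cocycle in $\End_S^\bullet(\bfF)$ whose class is the claimed obstruction. Throughout I would use the $\bbz$-grading on $\End_S^\bullet(\bfF)\otimes_S S[t]$ from the remark following Construction~\ref{const:aff}, in which $\delta_j\in\End_S^j(\bfF)$ and $t$ has cohomological degree $-1$; then each $d_{i-1}=\sum_{j=1}^{i}t^{j-1}\delta_j$ is homogeneous of degree $1$, and composing two such terms produces the Koszul sign that arises from commuting powers of $t$ past the $\delta$'s. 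Write $d^{\End}$ for the differential of $\End_S^\bullet(\bfF)$, so that $d^{\End}(\phi) = \delta_1\phi - (-1)^{|\phi|}\phi\,\delta_1$ with $\delta_1 = d^{\bfF}$.

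First I would record what the hypothesis supplies. That $(F\otimes_S S_i,d_{i-1})$ is a differential module says $d_{i-1}^2=0$ over $S_i = S[t]/(t^i)$; collecting powers of $t$ in $d_{i-1}^2$ (computed over $S[t]$) this is equivalent to the list of identities in $\End_S^\bullet(\bfF)$ asserting that the coefficient of $t^{m-2}$ — which up to sign is $\sum_{j+k=m}\delta_j\delta_k$ — vanishes for all $2\le m\le i+1$. For $m=2$ this is $\delta_1^2=0$, for $m=3$ it says $\delta_2$ is a chain map $\bfF\to\bfF[-2]$, and so on; I would only need these through $m=i+1$. Now expanding,
\[
d_i^2 \;=\; d_{i-1}^2 \;+\; d_{i-1}\bigl(t^i\delta_{i+1}\bigr)+\bigl(t^i\delta_{i+1}\bigr)d_{i-1}\;+\;\bigl(t^i\delta_{i+1}\bigr)^2 .
\]
Over $S_{i+1}$ the last term vanishes since $2i\ge i+1$; after multiplication by $t^i$ only the anchor part $\delta_1$ of $d_{i-1}$ survives in the two middle terms, contributing $\pm t^i\,d^{\End}(\delta_{i+1})$; and by the identities above every coefficient of $t^n$ in $d_{i-1}^2$ with $n<i$ vanishes, so $d_{i-1}^2\equiv t^i\omega_{i+1}\pmod{t^{i+1}}$ where $\omega_{i+1}\in\End_S^{i+2}(\bfF)$ equals, up to sign, the ``curvature'' term $\sum_{j+k=i+2,\;j,k\ge 2}\delta_j\delta_k$ built from the maps chosen in earlier steps. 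Hence over $S_{i+1}$ one has $d_i^2\equiv t^i\bigl(\omega_{i+1}\pm d^{\End}(\delta_{i+1})\bigr)$, and the desired lift exists exactly when $\delta_{i+1}\in\End_S^{i+1}(\bfF)$ can be chosen with $\omega_{i+1}=\mp d^{\End}(\delta_{i+1})$.

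To match the statement of the lemma I must first check that $\omega_{i+1}$ is a cocycle — this is what makes ``nullhomotopic'' meaningful and places the associated map in $Z^{i+2}(\End_S^\bullet(\bfF))=\hom_{\ch(S)}(\bfF,\bfF[-i-2])$. The cleanest argument is that $d_{i-1}$ graded-commutes with $d_{i-1}^2$ over $S[t]$ (both equal $d_{i-1}^3$); splitting $d_{i-1}=\delta_1+\Delta$ with $\Delta\in t\cdot\End_S^\bullet(\bfF)\otimes_S S[t]$ and using $d_{i-1}^2\in(t^i)$, the part of this commutator involving $\Delta$ lands in $(t^{i+1})$, while the part involving $\delta_1$ is $d^{\End}(d_{i-1}^2)$; reducing modulo $t^{i+1}$ leaves $\pm t^i\,d^{\End}(\omega_{i+1})\equiv 0$, so $d^{\End}(\omega_{i+1})=0$. (Alternatively one can derive this directly from the $m=i+1$ identity via the Leibniz rule on $\End_S^\bullet(\bfF)$.) Consequently the lift exists if and only if the cocycle $\omega_{i+1}$ is a coboundary, i.e.\ the chain map $\omega_{i+1}\colon\bfF\to\bfF[-i-2]$ is nullhomotopic, i.e.\ $[\omega_{i+1}]=0$ in $H^{i+2}(\End_S^\bullet(\bfF))$ — which is $\ext_S^{i+2}(M,M)$ when $\bfF$ resolves $M$ — exactly as claimed.

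The one place I expect friction is the sign and grading bookkeeping: one must pin down the Koszul signs coming from the $t$-rescaling carefully enough to identify $\delta_1\delta_{i+1}\pm\delta_{i+1}\delta_1$ with $\pm d^{\End}(\delta_{i+1})$ and to be sure the curvature term $\omega_{i+1}$ is genuinely closed. The overall signs of $\omega_{i+1}$ and of $d^{\End}(\delta_{i+1})$ are themselves irrelevant, since being minus a coboundary is the same as being a coboundary; what matters is only that the two expressions agree up to an overall sign, which the grading conventions guarantee. Once that is settled the remaining steps are short computations, and the lemma is simply the first instance of the usual deformation-theory mechanism for the DG (Lie) algebra $\End_S^\bullet(\bfF)$, with obstructions recorded in its second cohomology shifted along the $t$-adic filtration.
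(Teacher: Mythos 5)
Your argument is correct, and its skeleton is the same as the paper's: expand $d_i^2 = (d_{i-1}+t^i\delta_{i+1})^2$ over $S_{i+1}$, observe that the coefficient of $t^i$ is $\omega_{i+1} \pm (\delta_1\delta_{i+1}+\delta_{i+1}\delta_1)$ with $\omega_{i+1}=\sum_{j=2}^{i}\delta_j\delta_{i+2-j}$, and conclude that a lift exists precisely when $\delta_{i+1}$ null-homotopes $\omega_{i+1}$, i.e.\ when the class of $\omega_{i+1}$ dies in $H^{i+2}(\End_S^\bullet(\bfF))$. Where you genuinely diverge is in the one nontrivial verification, namely that $\omega_{i+1}$ is a chain map $\bfF\to\bfF[-i-2]$: the paper proves this by substituting the lower-order relations $\delta_p\delta_1=-\sum_k\delta_k\delta_{p+1-k}$ and $\delta_1\delta_p=-\sum_\ell\delta_\ell\delta_{p+1-\ell}$ into both sides and reindexing a double sum, whereas you deduce it structurally from $[d_{i-1},d_{i-1}^2]=0$ over $S[t]$ together with the observation that the $\Delta$-part of the commutator lies in $(t^{i+1})$ because $\Delta\in(t)$ and $d_{i-1}^2\in(t^i)$; since $d_{i-1}^2$ sits in even degree the graded and ordinary commutators agree, so this recovers exactly the commutation identity the paper checks by hand. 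Your route is cleaner and is close in spirit to the paper's own remark that the obstructions are Massey powers of $\delta_2$, which it offers as an alternative justification of the same point. Your worry about Koszul signs from the $t$-rescaling is no more of an issue for you than it is for the paper (which also works with the unsigned identities): twisting a degree-$d$ map by $(-1)^n$ on the component at $F_n$ interchanges the conditions $\delta_1\phi+\phi\delta_1=\psi$ and $\delta_1\phi-\phi\delta_1=\tilde\psi$, so ``is a coboundary'' and ``is null-homotopic'' are insensitive to the convention, and the lemma only asserts the vanishing of a certain class. The only small item you leave implicit, as does the lemma statement itself, is the internal-degree bookkeeping recorded at the end of the paper's proof via Remark \ref{rmk:degree}.
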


        \begin{proof}
        
        Let $d_i$ be the map obtained by adding the maps $t^i\delta_{i+1}$ to $d_{i-1}$ as in Construction \ref{const:aff}. This gives a differential module over $F\otimes_S S_{i+1}$ if and only if $d_i^2 = 0 \mod (t^i)$. Assuming that $d_{i-1}^2 = 0$ mod $t^{i-1}$, this happens exactly when the following condition is satisfied:
        \[
        \sum_{j=1}^{i+1}\delta_{j}\delta_{i+2-j} = 0
        \]
        This can rewritten as
        \begin{equation}\label{eq:obstruction}
        \delta_{1}\delta_{i+1}+\delta_{i+1}\delta_{1} = -\sum_{j=2}^{i} \delta_{j}\delta_{i+2-j}
        \end{equation}
        
        We claim that the right hand side of (\ref{eq:obstruction}) defines a chain map $\bfF\to-\bfF[-i-2]$. This means that one must verify that there is an equality
        
        \[
        \left( \sum_{j=2}^{i} \delta_{j}\delta_{i+2-j} \right)\delta_{1} \qeq \delta_{1} \left(\sum_{j=2}^{i} \delta_{j}\delta_{i+2-j}\right).
        \]
        By assumption $(F\otimes S_i, d_{i-1})$ is a differential module, which means that for any $2\leq p\leq i$ there are equalities
        \begin{align*}
        \delta_{p}\delta_{1} = -\sum_{k=1}^{p-1}\delta_{k}\delta_{p+1-k} \quad \text{ and } \quad \delta_1\delta_p = -\sum_{\ell=2}^{p}\delta_\ell\delta_{p+1-\ell}.
        \end{align*}
        Using this, we rewrite each of the sums above to get 
        \[
        \sum_{j=2}^{i}\sum_{k=1}^{i-j+1} \delta_{j}\delta_{k}\delta_{i+3-j-k} \qeq \sum_{j=2}^{i}\sum_{\ell=2}^j \delta_{\ell}\delta_{j+1-\ell}\delta_{i+2-j}.
        \]
        Switching the order of the sums on the right, we find
        \[
         \sum_{j=2}^{i}\sum_{k=1}^{i-j+1} \delta_{j}\delta_{k}\delta_{i+3-j-k} \qeq \sum_{\ell=2}^{i}\sum_{j=\ell}^{i} \delta_{\ell}\delta_{j+1-\ell}\delta_{i+2-j}.
        \]
        Finally, reindex the righthand sum by first replacing $j$ by $k+\ell-1$ to obtain
        \[
        \sum_{j=2}^{i}\sum_{k=1}^{i-j+1} \delta_{j}\delta_{k}\delta_{i+3-j-k} \qeq 
        \sum_{\ell=2}^{i}\sum_{k=1}^{i-\ell+1} \delta_{\ell}\delta_{k}\delta_{i+3-\ell-k}.
        \]
        Replacing $\ell$ on the right with $j$, we see that this is indeed an equality, so the right hand side of (\ref{eq:obstruction}) is a chain map as claimed.
        This means that (\ref{eq:obstruction}) is exactly the condition that the the maps $\{\del_{j,j-i},\}$ are a nullhomotopy for this map $\bfF\to\bfF[-i-2]$. This chain map corresponds to a class in $H^{i+2} ( \End_S^\bullet ( \bfF))$. By Remark \ref{rmk:degree}, the map $\delta_j$ has degree $a-ja$, so the chain map in (\ref{eq:obstruction}) has degree $2a-(i+j)a$ when considering the cohomology as a graded $S$-module.
        \end{proof}

        \begin{remark}
            Viewing the endomorphism complex $A := \End^\bullet_S (F)$ as a DG $S$-algebra in the standard way, the condition (\ref{eq:obstruction}) is (up to some sign changes which we will not get into) equivalent to the condition
            $$d^A (\delta_{i+1}) = \sum_{j=2}^{i} (-1)^{j+1} \delta_j \delta_{i+2-j}.$$
            This means that in the cohomology algebra $H^\bullet (A)$, all of the classes $\sum_{j=2}^{i} (-1)^{j+1} \delta_j \delta_{i+2-j}$ are trivial, and hence the Massey powers (in the sense of \cite[Section 3]{kraines1966massey}) $\langle \delta_2 \rangle^k$ are trivial for all $k \geq 2$. Since Massey powers are known to give well-defined cohomology classes, this yields an alternative proof that the expression on the righthand side of (\ref{eq:obstruction}) is a morphism of complexes.
        \end{remark}

In a similar vein, the following lemma shows that the isomorphism classes of two different lifts as in Construction \ref{const:aff} are also parametrized by cohomology classes of $\End_S (\bfF)$:

    \begin{lemma}\label{lem:param} 
    If lifts exist as in Lemma \ref{lem:obstructions}, then the possible lifts are parameterized by $\langle t^i\rangle H^{i+1}(\End_{S_{i+1}}^\bullet ( \bfF \otimes_S S_{i+1}))_{a-(i+1)a}$.
 
    \end{lemma}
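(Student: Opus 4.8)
The plan is to regard a lift as in Construction~\ref{const:aff} as the single additional off-diagonal of maps $\{\del_{j+i+1,j}\}_{j=0}^{\ell-i-1}$ --- equivalently, as a choice of $\delta_{i+1}\in\End^{i+1}_S(\bfF)$ of internal degree $a-(i+1)a$ (by Remark~\ref{rmk:degree}) --- and then to run the usual two-step deformation-theoretic argument: first that the set of lifts is a torsor over a group of cocycles, and then that two lifts produce isomorphic differential modules exactly when their difference is a coboundary. Write $A:=\End_S^\bullet(\bfF)$ for the endomorphism DG-algebra and $d^A$ for its differential; since $S_{i+1}$ is free over $S$ one has $\End_{S_{i+1}}^\bullet(\bfF\otimes_S S_{i+1})=A\otimes_S S_{i+1}$, and because $t^{i+1}=0$ the subcomplex $\langle t^i\rangle(A\otimes_S S_{i+1})=t^iA$ is a copy of $A$ (with a $t^i$-tag), so $\langle t^i\rangle H^{j}(A\otimes_S S_{i+1})=t^iH^j(A)$. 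Thus it suffices to identify the lifts, up to the appropriate equivalence, with a torsor over $H^{i+1}(A)_{a-(i+1)a}$.

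\emph{Step 1 (lifts form a torsor over cocycles).} By Lemma~\ref{lem:obstructions} and its proof, lifting $(F\otimes_S S_i,d_{i-1})$ to $(F\otimes_S S_{i+1},d_i)$ amounts to choosing $\delta_{i+1}$ subject to the single relation~(\ref{eq:obstruction}). By hypothesis one such $\delta_{i+1}$ exists; fix it, obtaining a lift $d_i^{(0)}$. Every other lift is $d_i^{(0)}+t^i\eta$ with $\eta\in\End^{i+1}_S(\bfF)_{a-(i+1)a}$, and subtracting the two instances of~(\ref{eq:obstruction}) --- its right-hand side, which does not involve $\delta_{i+1}$, cancels --- shows the admissible $\eta$ are exactly those with $\delta_1\eta+\eta\delta_1=0$. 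With the sign conventions supplied by the $t$-rescaling of the remark after Lemma~\ref{lem:obstructions} (equivalently, since $t^i\eta$ is homogeneous of cohomological degree $0$ once $t$ is given cohomological degree $-1$), this says exactly that $\eta\in Z^{i+1}(A)_{a-(i+1)a}$, i.e.\ that $\eta$ is a chain map $\bfF\to\bfF[-i-1]$. So the set of lifts is a torsor over $Z^{i+1}(A)_{a-(i+1)a}=\langle t^i\rangle Z^{i+1}(A\otimes_S S_{i+1})_{a-(i+1)a}$.

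\emph{Step 2 (isomorphic lifts differ by a coboundary).} I would show that two lifts $d_i=d_i^{(0)}+t^i\eta$ and $d_i'=d_i^{(0)}+t^i\eta'$ are isomorphic in $\DM(S_{i+1},a)$, via an isomorphism reducing to the identity modulo $(t^i)$, if and only if $\eta-\eta'\in B^{i+1}(A)$. For ``if'', write $\eta'-\eta=d^A(h)$ with $h\in\End_S^i(\bfF)$; the internal degree of $h$ is forced to be $a-(i+1)a$ since $d^A$ preserves internal degree (this is the twist bookkeeping of Remark~\ref{rmk:degree}: $h\in\End^i_S(\bfF)_{a-(i+1)a}$, not in internal degree $0$). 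Then $\phi:=\operatorname{id}_{F\otimes_S S_{i+1}}+t^i\bar h$ ($\bar h$ the $S_{i+1}$-linear extension of $h$) is unipotent, since $(t^i\bar h)^2=t^{2i}\bar h^2=0$ in $S_{i+1}$ as $2i\ge i+1$, hence an isomorphism, and a direct computation modulo $t^{i+1}$ --- in which every cross-term $t^i\bar h\cdot(t\delta_2+t^2\delta_3+\cdots)$ drops out --- reduces $d_i'\phi=\phi d_i$ to $\eta'-\eta=h\delta_1-\delta_1 h$, i.e.\ to the chosen coboundary $\pm d^A(h)$. Conversely, if $\phi$ is any isomorphism of the two lifts with $\phi\equiv\operatorname{id}$ modulo $(t^i)$, then expanding $\phi$ in powers of $t$ and imposing that $\phi$ respect the flag grading forces $\phi=\operatorname{id}+t^i h$ for a homological-degree-$i$ map $h\in\End_S^i(\bfF)$ (truncation in $S_{i+1}$ kills the higher terms), and the same computation yields $\eta'-\eta=d^A(h)$. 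Combining Steps 1 and 2, the lifts modulo this equivalence form a torsor over $Z^{i+1}(A)_{a-(i+1)a}/B^{i+1}(A)_{a-(i+1)a}=H^{i+1}(A)_{a-(i+1)a}$, which by the first paragraph equals $\langle t^i\rangle H^{i+1}(\End_{S_{i+1}}^\bullet(\bfF\otimes_S S_{i+1}))_{a-(i+1)a}$ (and equals $\ext_S^{i+1}(M,M)_{a-(i+1)a}$, in the $t^i$-component, when $\bfF$ resolves $M$).

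The main obstacle I expect is making Step 2 airtight. First, the signs are genuinely delicate, because the maps $\delta_j$ of Construction~\ref{const:aff} do not literally obey the signed Leibniz relations; the clean route is to perform the entire computation inside the $t$-graded DG-algebra $A\otimes_S S_{i+1}$ (with $\deg t=1$ homologically), where $d_{i-1}$, the perturbation $t^i\delta_{i+1}$, and the gauge $\operatorname{id}+t^i\bar h$ are all homogeneous of the correct degree and $d^A$ carries an unambiguous sign, so that ``$\eta'-\eta$ is a coboundary'' becomes the statement $d^{A\otimes S_{i+1}}(t^i\bar h)=\pm t^i(\eta'-\eta)$ with $t^i\bar h$ of cohomological degree $0$. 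Second, one must check that ``isomorphic as degree-$a$ differential modules, compatibly with the part of the flag already built'' is captured exactly by gauge equivalence through the maps $\operatorname{id}+t^i\bar h$ --- i.e.\ that at this stage it is legitimate to restrict to isomorphisms reducing to the identity modulo $(t^i)$, the remaining freedom (from automorphisms of the earlier stages) being accounted for when the stages are assembled. The internal-degree bookkeeping is, by contrast, routine given Remark~\ref{rmk:degree}.
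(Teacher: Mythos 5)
Your proposal is correct and takes essentially the same route as the paper: you identify the difference of two lifts with a cocycle in the (shifted, $t$-twisted) endomorphism complex and show a nullhomotopy $h$ produces an isomorphism via the unipotent gauge $\operatorname{id}+t^i\bar h$, which is exactly the paper's matrix $P_i$, with the identification $\langle t^i\rangle H^{i+1}(\End_{S_{i+1}}^\bullet(\bfF\otimes_S S_{i+1}))\cong H^{i+1}(\End_S^\bullet(\bfF))$ being the paper's Lemma \ref{lem:ext}. The converse direction you sketch (isomorphic lifts differ by a coboundary), together with the gauge-restriction caveat you honestly flag, goes beyond what the paper's own proof of this lemma establishes, so it is not needed to match the paper's argument.
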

    \begin{proof}
    The statement will follow by proving
    \begin{enumerate}[(1)]
        \item the difference of any two lifts corresponds to a well-defined cohomology class in $\End_S ( \bfF)$, and
        \item if this difference is $0$ in cohomology (that is, the maps are homotopic), then the two lifts give isomorphic differential modules.
    \end{enumerate}
    Suppose we have $d_i$ and $d'_i$ that are both lifts of $d_{i-1}$ via maps $\{t^i\del_{j+i+1, j}\}_{j=0}^{\ell-i-1}$ and $\{t^i\del'_{j+i+1, j}\}_{j=0}^{\ell-i-1}$ respectively. Since both satisfy (\ref{eq:obstruction}), their difference $\{t^i\del_{j+i+1, j} - t^i\del'_{j+i+1,j}\}_{j=0}^{\ell-i-1}$ defines a chain map $$\bfF\otimes S_{i+1}\to -\bfF[-i-1]\otimes S_{i+1}$$ which by Remark \ref{rmk:degree} belongs to a class in $H^{i+1} ( \End_{S_{i+1}}^\bullet ( \bfF\otimes_S S_{i+1}))_{a-(i+1)a}$. 
    Suppose this map is nullhomotopic. Then there exist $h_{j+i,j}: F_j\otimes S_{i+1}\to F_{j-i}\otimes S_{i+1}$ for each $j$ so that
    \begin{equation}\label{eq:homotopy-iso}
    t^i\del_{j+i+1, j} - t^i\del'_{j+i+1,j} = t^i h_{j+i,j}\del_{j+i+1,j+i}-t^i\del_{j+1,j}h_{j+i+1,j+1}.
    \end{equation}
    Let $P_i$ is the matrix with 1s on the diagonal and whose entry in the $p^{th}$ row and $q^{th}$ column is $t^ih_{q,p}$ when $q-p = i$ and 0 otherwise (for instance see $P_2$ below). 

    \[
P_2 = \begin{pmatrix}
    1 & 0 & t^2 h_{2,0} & 0  & \hdots & 0 & 0\\
    0 & 1 & 0 & t^2 h_{3,1}  & 0 & \hdots & 0\\
    0 & 0 & 1 & 0 & t^2 h_{4,2} & \hdots & 0\\
    \vdots & \vdots & \vdots & \ddots  & \ddots & \ddots & \vdots \\
   0 & 0 & 0 & \hdots  & 1 & 0 & t^2 h_{\ell, \ell-2}
\end{pmatrix}
    \]
By solving for $t^i\del'_{j+i+1,j}$ in (\ref{eq:homotopy-iso}), we can write the differential $d'_i$ as the matrix whose entries are 
\[
(d'_i)_{p,r} = \begin{cases}
t^{r-p-1}\del_{r,p} - t^{r-p}h_{r-1,p}\del_{r,r-1}+ t^{r-p}\del_{p+1,p}h_{r,p+1} & \text{ if }~~ r-p = i+1\\
t^{r-p-1}\del_{r,p} & \text{ if }~~ 1\leq r-p\leq i \\
0 & \text{ otherwise }
\end{cases}
\]
We claim that $d'_i = P_i^{-1}d_iP_i$ (mod $t^{i+1}$). 
First, notice that (mod $t^{i+1}$) the inverse $P_i^{-1}$ is the matrix whose entries are 
\[
(P_i)^{-1}_{p,k} = \begin{cases}
-t^ih_{p,k} & \text{ if }~~ p-k = i\\
1 & \text{ if }~~ p-k = 0\\
0 & \text{ otherwise }
\end{cases}.
\]
Using this, we compute $P_i^{-1}d_iP_i$ entry-wise:
\begin{align*}
    (P_i^{-1}d_iP_i)_{p,r} &= \sum_{q=0}^\ell \sum_{k=0}^\ell (P_i^{-1})_{p,k}(d_i)_{k,q}(P_i)_{q,r}\\
    &= \sum_{k=0}^\ell (P_i^{-1})_{p,k}(d_i)_{k,r} + \sum_{k=0}^\ell (P_i^{-1})_{p,k}(d_i)_{k,r-i}t^ih_{r,r-i}\\
    &= (d_i)_{p,r} + t^i(d_i)_{p,r-i}h_{r,r-i} - t^ih_{p+i,p}(d_i)_{p+i,r} - t^{2i}h_{p+i,p}(d_i)_{p+i,r-i}h_{r,r-i}\\
    &=(d_i)_{p,r} + t^i(d_i)_{p,r-i}h_{r,r-i} - t^ih_{p+i,p}(d_i)_{p+i,r}~~~(\text{mod } t^{i+1})\\
    &= \begin{cases}
    t^i\del_{r,p} + t^i\del_{r-i,p}h_{r,r-i} - t^ih_{p+i,p}\del_{r,p+i} &\text{ if }~~ r-p = i+1\\
     t^{r-p-1}\del_{r,p} & \text{ if }~~ 1\leq r-p\leq i\\
    0 & \text{ otherwise. }
    \end{cases}
\end{align*}
Since $d'_i = P_i^{-1}d_iP_i$ (mod $t^{i+1}$), it follows that $(F\otimes_S S_{i+1}, d_i)$ and $(F\otimes_S S_{i+1}, d'_i)$ are isomorphic differential modules over $S_{i+1}$. Hence, the choices of lifts from $S_i$ to $S_{i+1}$ are parameterized up to isomorphism by a class in $\langle t^i\rangle H^{i+1}(\End_S ( \bfF \otimes_S S_{i+1}))_{a-(i+1)a}$.
\end{proof}

\begin{lemma}\label{lem:ext}
There is an isomorphism of $\bbk$-vector spaces
$$\langle t^i\rangle H^{i+1} (\End_{S_{i+1}}^\bullet ( \bfF \otimes_S S_{i+1}))_{a-(i+1)a}\simeq H^{i+1} (\End_S^\bullet (\bfF))_{a-(i+1)a}.$$
\end{lemma}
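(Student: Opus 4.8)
The plan is to deduce this from two entirely formal facts: that forming the endomorphism complex commutes with the flat base change $S\to S_{i+1}$ (because $\bfF$ is a bounded complex of finitely generated free $S$-modules), and that $S_{i+1}=S[t]/(t^{i+1})$ is $S$-free on $1,t,\dots,t^i$, so this base change splits the cohomology into a direct sum of $i+1$ copies of $H^\bullet(\End_S^\bullet(\bfF))$ indexed by powers of $t$. The class asserted in the statement is then precisely the top ($t^i$) summand, and killing the formal scalar $t^i$ gives the isomorphism.

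First I would record the canonical isomorphism of cochain complexes of $S_{i+1}$-modules
\[
\End_{S_{i+1}}^\bullet(\bfF\otimes_S S_{i+1})\;\cong\;\End_S^\bullet(\bfF)\otimes_S S_{i+1}.
\]
In each cohomological degree $j$ this is the natural map $\hom_S(\bfF_n,\bfF_{n-j})\otimes_S S_{i+1}\to\hom_{S_{i+1}}(\bfF_n\otimes_S S_{i+1},\bfF_{n-j}\otimes_S S_{i+1})$, an isomorphism because each $\bfF_n$ is finitely generated free over $S$ (the product over $n$ being a finite sum since $\bfF$ is bounded), and it is compatible with the differential $\phi\mapsto d^\bfF\phi-(-1)^{|\phi|}\phi d^\bfF$, which is simply base-changed along $S\to S_{i+1}$. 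Since $S_{i+1}$ is $S$-free on $1,t,\dots,t^i$ -- with $t$ carrying no internal degree and, under the convention (implicit in Lemma \ref{lem:param}) that $\End^\bullet$ is graded only by the cohomological degree of $\bfF$, no cohomological degree either -- the right-hand side decomposes as a complex into $\bigoplus_{k=0}^{i}\End_S^\bullet(\bfF)\,t^k$, a decomposition respected by the differential. Taking $H^{i+1}$ and restricting to internal degree $a-(i+1)a$ then gives
\[
H^{i+1}\!\big(\End_{S_{i+1}}^\bullet(\bfF\otimes_S S_{i+1})\big)_{a-(i+1)a}\;=\;\bigoplus_{k=0}^{i}H^{i+1}\!\big(\End_S^\bullet(\bfF)\big)_{a-(i+1)a}\,t^k,
\]
and extracting the $\langle t^i\rangle$-summand on the left leaves $H^{i+1}(\End_S^\bullet(\bfF))_{a-(i+1)a}\,t^i$, carried isomorphically onto $H^{i+1}(\End_S^\bullet(\bfF))_{a-(i+1)a}$ by deleting $t^i$. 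This is the claimed isomorphism of $\bbk$-vector spaces (in fact of $S_0$-modules; it is also natural and compatible with the DG-algebra structures, though that is not needed here).

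Since the argument is purely formal, I do not anticipate a genuine obstacle; the one point requiring attention is the grading bookkeeping -- confirming that the cohomological index $i+1$ and the internal degree $a-(i+1)a$ on the two sides really do match up under the base-change identification, given that the Remark following Construction \ref{const:aff} assigns $t$ a homological degree while the statement here (following Lemma \ref{lem:param}) records the $t$-power separately via the $\langle t^i\rangle$ notation and grades $\End^\bullet$ by the cohomological degree of $\bfF$ alone. Once that convention is pinned down, the chain of identifications above goes through verbatim.
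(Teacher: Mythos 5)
Your argument is correct and is essentially the paper's own proof: both identify $\End_{S_{i+1}}^\bullet(\bfF\otimes_S S_{i+1})$ with $\End_S^\bullet(\bfF)\otimes_S S_{i+1}$ via the free (hence flat) base change $S\to S_{i+1}=\bigoplus_{k=0}^i S\,t^k$, commute cohomology with this tensor, and then extract the rank-one $\langle t^i\rangle$-summand. Your explicit direct-sum decomposition and the grading remark about $t$ are just a more detailed spelling-out of the same steps, so no further changes are needed.
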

\begin{proof}
Let $\phi: S\to S_{i+1}$ be the natural inclusion of $S$-algebras; notice that $S_{i+1} \cong \bigoplus_{j=0}^{i} S \cdot t^j$ is a free $S$-module. In particular, the ideal $\langle t^i \rangle := (t^i)/(t^{i+1}) \subset S_{i+1}$ is isomorphic to $S t^i$ and is hence a free $S$-module of rank $1$. Since tensoring with a free $S$-module is exact and $\bfF$ is a complex of free $S$-modules, there is an isomorphism of $S$-modules
\[
H^{i+1} (\End_{S_{i+1}}^\bullet ( \bfF \otimes_S S_{i+1}))\cong H^{i+1} ( \End_S^\bullet( \bfF) ) \otimes_S S_{i+1}.
\]
Multiplying the above by the ideal $\langle t^i \rangle \subset S_{i+1}$, it follows that
\begin{align*}
    \langle t^i\rangle H^{i+1} (\End_{S_{i+1}}^\bullet ( \bfF \otimes_S S_{i+1})) &\simeq H^{i+1} (\End_S^\bullet (\bfF)) \otimes_S \underbrace{\langle t^i \rangle S_{i+1}}_{\simeq S} \\
    &\simeq H^{i+1} (\End_S^\bullet (\bfF)).
\end{align*}
Restricting to homogeneous components yields the statement of the lemma.
\end{proof}

The content of the above lemmas combines to show that the lifts and obstructions appearing in Construction \ref{const:aff} are parameterized by cohomology classes of the endomorphism complex. The final lemma we need in order to parameterize the entire set of free flags anchored on a fixed complex $\bfF$ is the following.

\begin{lemma}\label{lem:build-aff}
Every free flag anchored on $\bfF$ may be obtained via Construction \ref{const:aff}.
\end{lemma}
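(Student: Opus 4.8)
The plan is to argue that an arbitrary free flag $D$ anchored on $\bfF$ already has a differential whose off-diagonal blocks, read diagonal by diagonal, satisfy exactly the relations imposed in Construction \ref{const:aff}, so that $D$ is obtained by making the ``correct'' choice of lift at each stage. Concretely, write $D = \bigoplus_{i} F_i$ with differential $\del$ represented as a strictly upper-triangular block matrix whose $(p,r)$-entry for $r > p$ is some map $\del_{r,p}\colon F_r \to F_p$, and whose first off-diagonal entries $\del_{i,i-1}$ are the anchor differentials by hypothesis. Following the notation of the excerpt, let $\delta_j\colon F\to F$ be the map restricting to $\del_{i,i-j}$ on each $F_i$. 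The key observation is that the single equation $\del^2 = 0$, expanded in terms of the $\delta_j$, decomposes (by ``distance from the diagonal'') into exactly the family of equations $\sum_{j=1}^{k-1}\delta_j\delta_{k-j} = 0$ for each $k \geq 2$, which are precisely the relations that Construction \ref{const:aff} records as the condition $d_i^2 \equiv 0 \bmod t^i$ (cf. the displayed identity $\sum_{j=1}^{i+1}\delta_j\delta_{i+2-j} = 0$ in the proof of Lemma \ref{lem:obstructions}, after reindexing).

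First I would set up the bijection between free flags anchored on $\bfF$ and differential modules over $S_\ell$ of the shape appearing in Construction \ref{const:aff}: given such a flag $D$, define $d_{\ell-1}$ over $S_\ell = S[t]/(t^\ell)$ by placing $t^{j-1}\delta_j$ on the $j$-th off-diagonal, exactly the rescaling discussed in the Remark following Construction \ref{const:aff}. Then I would check that $d_{\ell-1}^2 = 0$ in $S_\ell$: the coefficient of $t^{k-2}$ in $d_{\ell-1}^2$ is $\sum_{j=1}^{k-1}\delta_j\delta_{k-j}$, which vanishes because it is precisely the ``distance-$k$'' component of $\del^2 = 0$ back in $S$; and no negative or too-large powers of $t$ intervene because $F_i = 0$ outside $0\le i\le \ell$, so all indices are in range and $t^{\ell}$ and higher simply do not appear. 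Conversely setting $t = 1$ recovers $\del$. This shows $D$ arises as the $\ell$-th stage $(F\otimes_S S_\ell, d_{\ell-1})$ of \emph{some} run of the construction.

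Next I would produce the intermediate stages: define $d_{i-1}$ over $S_i$ to be $d_{\ell-1}$ reduced modulo $(t^i)$, i.e. keep only the off-diagonals $\delta_1,\dots,\delta_i$ with their $t$-coefficients and discard the rest. Since $d_{\ell-1}^2 = 0$ in $S_\ell$, reduction mod $(t^i)$ gives $d_{i-1}^2 = 0$ in $S_i$, so each $(F\otimes_S S_i, d_{i-1})$ is a genuine differential module of the exact shape displayed in Construction \ref{const:aff}; and $d_i \bmod (t^i) = d_{i-1}$, so $d_i$ is a valid lift of $d_{i-1}$ in the sense of that construction (in particular the relevant class in $H^{i+2}(\End^\bullet_S(\bfF))$ vanishes, consistently with Lemma \ref{lem:obstructions}). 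Thus running the construction and choosing, at step $i$, the maps $\{t^i\del_{j+i+1,j}\}_j$ coming from $d_{\ell-1}$ reproduces $D$ on the nose.

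The main obstacle — and really the only place where there is something to say beyond bookkeeping — is the clean verification that $\del^2 = 0$ splits into the homogeneous pieces $\sum_{j}\delta_j\delta_{k-j} = 0$ and that these match the construction's relations after the $t$-rescaling, including getting the index ranges and the ``$F_i = 0$ for $i<0$ or $i>\ell$'' boundary conventions right so that no spurious terms appear and the process genuinely terminates at step $\ell$ with $d_{\ell-1}^2 = 0$ exactly (not merely mod $t^\ell$). Everything else is unwinding definitions. I would close by remarking that, combined with Lemmas \ref{lem:obstructions}, \ref{lem:param}, and \ref{lem:ext}, this shows the set of free flags anchored on $\bfF$ is parameterized, stage by stage, by the cohomology classes of $\End^\bullet_S(\bfF)$, which is the input needed for the main theorem.
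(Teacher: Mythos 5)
Your proposal is correct and follows essentially the same route as the paper's proof: rescale the $j$-th off-diagonal of the flag's differential by $t^{j-1}$, observe that $\del^2=0$ decomposes by distance from the diagonal so that each truncation mod $(t^i)$ is a differential module over $S_i$, and note that the original flag is recovered by running Construction \ref{const:aff} with these choices and setting $t=1$. Your write-up is somewhat more explicit than the paper's about the degreewise splitting of $\del^2=0$ and the boundary conventions, but there is no substantive difference in the argument.
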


\begin{proof}
Let $\bfF = ~~ F_0\xleftarrow{\del_1,0} F_1\xleftarrow{\del_{2,1}}\cdots \xleftarrow{\del_{\ell, \ell-1}}F_\ell \leftarrow 0$ be any complex of free $S$-modules. An anchored free flag $F$ on $\bfF$ has differential
\[
\del = \begin{pmatrix}
0 &\del_{1,0} & \del_{2,0} & \cdots & \del_{\ell,0}\\
0 & 0 & \del_{2,1} & \cdots & \del_{\ell,1}\\
\vdots & \vdots & \vdots & \ddots &\vdots\\
0 & 0 & 0 & \cdots & \del_{\ell, \ell-1} \\
0 & 0 & 0 & \cdots & 0
\end{pmatrix}
\]
Rescaling appropriately by powers of $t$ to $\del$, we obtain a differential module over $S[t]$ whose differential is 

\[
\del' = \begin{pmatrix}
0 &\del_{1,0} & t\del_{2,0} & \cdots & t^{\ell-1}\del_{\ell,0}\\
0 & 0 & \del_{2,1} & \cdots & t^{\ell-2}\del_{\ell,1}\\
\vdots & \vdots & \vdots & \ddots &\vdots\\
0 & 0 & 0 & \cdots & \del_{\ell, \ell-1} \\
0 & 0 & 0 & \cdots & 0
\end{pmatrix}
\]
and which specializes to $(F,\del)$ when $t=1$. Note that for each $i$, $(F\otimes S_i, \del')$ is a differential module over $S_i$, so this is the differential module we end up with if we run Construction \ref{const:aff} and at the $i\th$ step pick the maps $\del_{j,j-i-1}$ as our augmenting maps.
\end{proof}

\subsection{Dimension Bounds}


Using the material proved in \ref{subsec:buildFreeFlag}, we can show that for a fixed anchor $\bfF$, the isomorphism classes of degree $a$ free flags anchored on $\bfF$ are parametrized by a well-behaved geometric object:

\begin{theorem}\label{thm:varietyOfIsoClasses}
Let $S$ be a graded-local ring where the residue field $\bbk$ of $S_0$ is algebraically closed. The set of isomorphism classes of free flags in $\DM(S,a)$ anchored on a free complex $\bfF$ is parameterized by a nonempty variety $\calX^\bfF_a$ whose dimension satisfies the following bounds:
\[
\sum_{i=2}^\ell\dim_\bbk H^i (\End_S^\bullet (\bfF))_{a-ia} - \sum_{i=4}^\ell\dim_\bbk H^i (\End_S^\bullet (\bfF))_{2a-ia}\leq \dim \calX^\bfF_a \leq \sum_{i=2}^\ell \dim_\bbk H^i (\End_S^\bullet (\bfF))_{a-ia}.
\]
Moreover, $\dim \calX^\bfF_a = 0$ if and only if $\calX^\bfF_a$ consists of a single point corresponding to the degree $a$ fold $\fold^a (\bfF)$.


\end{theorem}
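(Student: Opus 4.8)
The plan is to build $\calX^\bfF_a$ as the output of the iterative construction, keeping track of the fibered structure produced by Lemmas \ref{lem:obstructions}, \ref{lem:param}, and \ref{lem:ext}. At step $i$ (for $i$ running from $2$ to $\ell$), conditioned on having made a choice through step $i-1$, the obstruction class of Lemma \ref{lem:obstructions} lives in $H^{i+2}(\End_S^\bullet(\bfF))_{2a-(i+2)a}$; when it vanishes, the set of lifts up to isomorphism is a torsor over (hence, after fixing a basepoint, identified with) the $\bbk$-vector space $H^{i+1}(\End_S^\bullet(\bfF))_{a-(i+1)a}$ by Lemmas \ref{lem:param} and \ref{lem:ext}. (Reindexing $i\mapsto i-1$ so the deformation at stage $i$ adds the $i$-th off-diagonal, the relevant spaces are $H^{i}(\End_S^\bullet(\bfF))_{a-ia}$ for deformations and $H^{i}(\End_S^\bullet(\bfF))_{2a-ia}$ for obstructions, matching the sums in the statement.) So $\calX^\bfF_a$ is an iterated sequence of affine-bundle-like fibrations: the fiber over a point already constructed is the zero locus inside an affine space $\bbA^{d_i}$, $d_i = \dim_\bbk H^{i}(\End_S^\bullet(\bfF))_{a-ia}$, of finitely many polynomial equations (the vanishing of the next obstruction, which is a polynomial — in fact quadratic — function of the previous choices). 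Since $\bbk$ is algebraically closed and Construction \ref{const:aff} together with Lemma \ref{lem:build-aff} shows every anchored free flag arises this way with $t=1$, this exhibits $\calX^\bfF_a$ as a (constructible, and with a little care, quasi-affine or affine) variety over $\bbk$, and it is nonempty because $\fold^a(\bfF)$ itself is such a flag (all higher maps zero).

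For the dimension bounds: the upper bound is immediate, since at each stage the fiber sits inside $\bbA^{d_i}$, so $\dim\calX^\bfF_a \le \sum_{i=2}^\ell d_i = \sum_{i=2}^\ell \dim_\bbk H^i(\End_S^\bullet(\bfF))_{a-ia}$. For the lower bound, observe that the obstruction encountered at stage $i$ is a map into $H^{i+1}(\End_S^\bullet(\bfF))_{2a-(i+1)a}$ — reindexed, a cut by at most $e_i := \dim_\bbk H^{i}(\End_S^\bullet(\bfF))_{2a-ia}$ polynomial equations, and this only occurs for $i\ge 4$ (the first obstruction lives in $H^4$, since the stage-$2$ deformation $\delta_2$ is unobstructed and the stage-$3$ obstruction involves $\delta_2\delta_2$ landing in $H^4$). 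A standard fact from algebraic geometry — cutting a variety by one hypersurface drops the dimension by at most one — gives, over the whole iterated construction, $\dim\calX^\bfF_a \ge \sum_{i=2}^\ell d_i - \sum_{i=4}^\ell e_i$, which is exactly the stated lower bound. One should be slightly careful that the construction's fibers are equidimensional enough for this to be applied to an irreducible component achieving the dimension, but choosing a component of $\calX^\bfF_{a}$ of top dimension and tracking its image in each intermediate stage makes this go through.

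For the final equivalence: if $d_i = e_i = 0$ were forced, the lower and upper bounds coincide at $0$ and $\calX^\bfF_a$ is a single point; the content is the ``only if'' direction. If $\dim\calX^\bfF_a = 0$, I would argue that every deformation space $H^i(\End_S^\bullet(\bfF))_{a-ia}$ ($2\le i\le\ell$) must vanish: the lower bound gives $\sum_{i=2}^\ell d_i \le \sum_{i=4}^\ell e_i$, but more directly, since $\fold^a(\bfF) \in \calX^\bfF_a$ and at the \emph{first} stage ($i=2$) there is no obstruction, the fiber over $\fold^a(\bfF)$ at stage $2$ is all of $\bbA^{d_2} = H^2(\End_S^\bullet(\bfF))_{a-2a}$; a positive-dimensional fiber at any stage over a point that extends to a genuine flag forces $\dim\calX^\bfF_a>0$, so $d_2=0$, and then inductively (the stage-$i$ obstruction vanishes identically when all earlier deformation classes are zero, since it is built from products of those classes) each subsequent $d_i = 0$ as well. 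Hence the only flag is the one with all higher maps zero, i.e. $\fold^a(\bfF)$, and $\calX^\bfF_a$ is the single reduced point. The main obstacle I anticipate is the bookkeeping needed to make ``iterated fibration by zero loci of polynomial maps between affine spaces'' into a bona fide variety structure that is independent of the choices of basepoints/splittings at each stage — i.e. checking the gluing is algebraic — and then rigorously applying the ``each hypersurface drops dimension by $\le 1$'' estimate across stages where the number of defining equations ($e_i$) itself may vary; the homological input (Lemmas \ref{lem:obstructions}–\ref{lem:ext}) does all the conceptual work, so the remaining difficulty is purely in the geometric packaging.
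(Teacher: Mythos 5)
Your construction of $\calX^\bfF_a$ and your two dimension bounds follow essentially the paper's route: iterate Construction \ref{const:aff}, use Lemma \ref{lem:obstructions} to cut by the fiber of the obstruction map and Lemmas \ref{lem:param}--\ref{lem:ext} to parameterize lifts by $H^{i}(\End_S^\bullet(\bfF))_{a-ia}$, then get the upper bound from the ambient product of affine spaces and the lower bound from the fact that imposing the obstruction conditions drops dimension by at most $\dim_\bbk H^{i}(\End_S^\bullet(\bfF))_{2a-ia}$ (the paper phrases this via the fiber dimension theorem applied to $\nu_i$ rather than hypersurface cutting, but it is the same estimate). The gap is in the final equivalence. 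Your argument for ``$\dim\calX^\bfF_a=0$ implies $\calX^\bfF_a=\{\fold^a(\bfF)\}$'' tries to deduce that every deformation space $H^i(\End_S^\bullet(\bfF))_{a-ia}$ vanishes, starting from the claim that a positive-dimensional stage-$2$ fiber over $\fold^a(\bfF)$ forces $\dim\calX^\bfF_a>0$. That inference is not valid: the points of that fiber are choices of $\delta_2$, and all but finitely many of them may fail to extend to an honest free flag because their obstruction classes (e.g.\ $[\delta_2\delta_2]\in H^4$) are nonzero at later stages, so a positive-dimensional space of first-order deformations is perfectly compatible with $\calX^\bfF_a$ being zero-dimensional. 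Indeed the theorem deliberately does \emph{not} assert that $\dim\calX^\bfF_a=0$ forces the cohomology groups to vanish (the lower bound has a negative term precisely to allow for this), so the statement you are trying to prove along the way is stronger than, and not implied by, the hypothesis.

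What is actually needed, and what the paper supplies, is an argument ruling out a \emph{finite} set of more than one isomorphism class. The paper does this with a rescaling trick: if $F$ is a flag anchored on $\bfF$ with data $\delta_2,\dots,\delta_\ell$ not isomorphic to $\fold^a(\bfF)$, then for $\lambda\in\bbk^\times$ the rescaled data $\lambda\delta_2,\lambda^2\delta_3,\dots$ give another anchored flag $F_\lambda$, and using Lemma \ref{lem:param} at the first index $p$ with $\delta_p\neq 0$ one shows $F_\lambda\not\cong F$ whenever $\lambda^{r}\neq 1$ for $1\le r\le \ell-1$ (if they were isomorphic, $(\lambda^{p-1}-1)\delta_p$ would be nullhomotopic, letting one successively zero out all $\delta_i$ and conclude $F\cong\fold^a(\bfF)$). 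Since $\bbk$ is algebraically closed, hence infinite, this produces infinitely many points of $\calX^\bfF_a$, contradicting $\dim\calX^\bfF_a=0$. Some such argument exploiting the $\bbk^\times$-action (or an equivalent mechanism) is the missing ingredient in your proposal; without it the ``only if'' direction does not go through.
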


\begin{proof}
Throughout the proof, we will freely employ notation established in subsection \ref{subsec:buildFreeFlag}, particularly Construction \ref{const:aff}. The proof follows by inductively defining a family of algebraic varieties $\{X_i\}_{i \geq 1}$ using the constructions of \ref{subsec:buildFreeFlag} whose points will eventually correspond to all possible isomorphism classes of free flags anchored on $\bfF$. When $i = 1$, the variety $X_1$ is defined to be the singleton corresponding to $\fold^a (\bfF)$. 

For conciseness of notation, define $E^i_j := \operatorname{Cl} \left( \Spec(\Sym^*(H^{i}(\End_S (M))_j)) \right)$ (where $\operatorname{Cl} (-)$ denotes the functor sending a $\bbk$-scheme to its closed points). Then for each $i \geq 1$, let
\[
\nu_i: X_i\to E^{i+2}_{2a-ia},
\]
be the map that sends the point corresponding to $F \otimes_S S_i$ to the chain map on the right hand side of equation (\ref{eq:obstruction}). Since this map is polynomial in the coordinates of the underlying vector spaces, this is a morphism of (classical) varieties, and since $\bbk$ is algebraically closed this induces a morphism of $\bbk$-schemes. 

By Lemma \ref{lem:obstructions}, the subset of $X_i$ that can be lifted to a differential module in $X_{i+1}$ is the fiber $K_i := \nu_i^{-1} (0)$ above the point corresponding to $0$ in $E^{i+2}_{2a-ia}$. On the other hand, Lemma \ref{lem:param} implies that the isomorphism classes of two lifts of the free flag $(F \otimes_S S_i , d_i)$ are parameterized by $E^{i+1}_{a-(i+1)a}$, in which case we set $X_{i+1} := K_i\times E^{i+1}_{a-(i+1)a}$ for $i \geq 1$. Notice that the points of $X_{i+1}$ may explicitly be described as isomorphism classes of free flags of the form $(F \otimes_S S_{i+1} , d_{i+1})$, obtained as in Construction \ref{const:aff}. Inductively, this shows that each $X_i$ is a variety and that there is an inclusion
$$X_i \subset X_1 \times E^{2}_{-a} \times E^2_{-2a} \times \cdots \times E^i_{a-ia}.$$
Define $\calX^\bfF_a$ to be $X_\ell$ (recall that $\ell$ is the length of $\bfF$). Setting $i = \ell$ in the above inclusion and counting dimensions yields the upper bound on $\dim \calX^\bfF_a$.

Next, we will establish the lower bound on $\dim \calX^\bfF_a$. Define $V^{i+2} := \im \nu_i$. By the fiber dimension formula (see, for instance, \cite[Exercise 3.22]{hartshorne1977algebraic}) combined with the definition of $X_i$ there is a string of (in)equalities
\begin{align*}
\dim K_i &\geq \dim X_i - \dim V^{i+2} \\
&= \dim K_{i-1} + \dim E^i_{a-ia} - \dim V^{i+2}\\
&\geq \dim K_{i-1} + \dim E^i_{a-ia} - \dim E^{i+2}_{2a-ia}.
\end{align*}
Setting $i = \ell$ and inducting downward, it follows that
$$\dim \underbrace{X_\ell}_{= \calX^\bfF_a} \geq \sum_{i=2}^\ell \dim E^i_{a-ia} - \sum_{i=4} E^i_{2a-ia}$$
$$=\sum_{i=2}^\ell\dim_\bbk H^i (\End_S^\bullet (\bfF))_{a-ia} - \sum_{i=4}^\ell\dim_\bbk H^i (\End_S^\bullet (\bfF))_{2a-ia}.$$
This establishes the lower bound.

Finally, it remains to show that if $\dim \calX^\bfF_a = 0$, then in fact $\calX^\bfF_a = \{ \fold^a (\bfF ) \}$ is a single point. A priori, the assumption that $\dim \calX^\bfF_a = 0$ implies that $\calX^\bfF_a$ consists of a finite set of points. Suppose for sake of contradiction that $\calX^\bfF_a$ contains another point corresponding to an isomorphism class distinct from that of $\fold^a (\bfF)$, and denote the corresponding free flag by $F$. Choose any $\lambda \in \bbk^\times$. If the maps $\delta_2 , \dots , \delta_\ell$ denote the data defining the free flag $F$, then the data $\lambda \delta_2 , \lambda^2 \delta_3, \dots , \lambda^{\ell-1} \delta_\ell$ defines another free flag, denoted $F_\lambda$, that is also anchored on $\bfF$. 

We claim that $F_{\lambda} \not\cong F$ if $\lambda$ is not a root of the polynomials $x^r - 1$ for $1 \leq r \leq \ell-1$. Notice that this gives an immediate contradiction since the fact that $\bbk$ is infinite implies that $\calX^\bfF_a$ must have infinitely many points (note that $\bbk$ is infinite by the algebraic closure assumption). To prove the claim, assume for sake of contradiction that $F_\lambda \cong F$ for $\lambda$ as above. Denote by $F_\lambda^t$ (resp. $F^t$) the deformed version of $F_\lambda$ (resp. $F$) obtained by rescaling the maps $\delta_i$ by powers of $t$. Then the free flags $F_\lambda^t$ and $F^t$ must also be isomorphic, and hence $F_\lambda^t \otimes_S S_i \cong F^t \otimes_S S_i$ for all $i \geq 1$.    

Choose $p := \min \{ i \geq 2 \mid \delta_i \neq 0 \}$. Observe that $p$ is well-defined by the assumption that $F$ is not in the isomorphism class of $\fold^a (\bfF)$. By Lemma \ref{lem:param}, the fact that $F_\lambda^t \otimes_S S_{p+1} \cong F^t \otimes_S S_{p+1}$ implies that the difference $(\lambda^{p-1} - 1)\delta_p$ is nullhomotopic. By the assumptions on $\lambda$ the scalar $\lambda^{p-1} - 1$ is nonzero, which implies that $\delta_p$ is nullhomotopic. Thus $F$ may be replaced with an isomorphic free flag satisfying $\delta_2 = \delta_3 = \cdots = \delta_p = 0$. Iterating this argument, it follows that $F$ has an isomorphic representative with $\delta_i = 0$ for all $i \geq 2$, implying that $F \cong \fold^a (\bfF)$; this contradiction yields the result.
\end{proof}

\begin{remark}
    It is worth mentioning that the assumption of algebraic closure for the field $\bbk$ is only necessary for invoking the equivalence of categories between the classical notion of a variety and reduced integral $\bbk$-schemes (which is induced by taking closed points). In general, the results of \ref{subsec:buildFreeFlag} are totally independent of any assumptions on the base field $\bbk$ and hence always give rise to at least an algebraic parametrization of free flags with a fixed anchor.
\end{remark}

In the case of a free flag anchored on a resolution, we can combine this with the results of Brown-Erman to deduce:

\begin{corollary}\label{cor:theMFRcase}
    Assume $\bbk$ is algebraically closed and let $M$ be any $S$-module. Then the set of quasi-isomorphism classes of degree $a$ differential modules with homology $M$ is parameterized by a nonempty variety $\calX^M_a$ whose dimension satisfies the following bounds:
    \[
\sum_{i=2}^\ell\dim_\bbk \ext_S^i (M,M)_{a-ia} - \sum_{i=4}^\ell\dim_\bbk \ext_S^i (M,M)_{2a-ia}\leq \dim \calX^M_a \leq \sum_{i=2}^\ell \dim_\bbk \ext_S^i (M,M)_{a-ia}.
\]
Moreover, $\dim \calX^M_a = 0$ if and only if the only quasi-isomorphism class of a differential module with homology $M$ is $M$ itself (viewed as a differential module with $0$ differential).
\end{corollary}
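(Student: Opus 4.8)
The plan is to deduce the corollary from Theorem~\ref{thm:varietyOfIsoClasses}, applied with $\bfF$ the minimal free resolution of $M$, together with Brown--Erman's Theorem~\ref{thm:danMikeThm}, which will let me identify the points of the resulting variety with quasi-isomorphism classes. First I would take $\bfF$ to be the minimal free resolution of $M$, of length $\ell=\pdim_S M$, and let $\calX^\bfF_a$ be the nonempty variety produced by Theorem~\ref{thm:varietyOfIsoClasses}. Since $\bfF$ resolves $M$, there is an isomorphism $H^i(\End^\bullet_S(\bfF))\cong\ext^i_S(M,M)$, so upon setting $\calX^M_a:=\calX^\bfF_a$ the dimension bounds of Theorem~\ref{thm:varietyOfIsoClasses} become precisely the claimed Ext bounds, and $\calX^M_a$ is nonempty. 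The remaining task is to show that the map sending the isomorphism class of a free flag $E$ anchored on $\bfF$ to its quasi-isomorphism class is a bijection onto the set of quasi-isomorphism classes of degree $a$ differential modules with homology $M$.

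This map is well defined, and has the asserted target, by Proposition~\ref{prop:flagsAnchoredOnRes}. Surjectivity comes straight from Theorem~\ref{thm:danMikeThm}: if $D\in\DM(S,a)$ has $H(D)\cong M$, then $D$ admits a free flag resolution $E\to D$ anchored on the minimal free resolution of $H(D)=M$, i.e.\ on $\bfF$, and $E$ represents the class of $D$. The real content is injectivity, which I would handle by a minimization argument. Given free flags $E,E'$ anchored on $\bfF$ with $E\simeq E'$, pass to minimal models: write $E\cong E_{\min}\oplus C$ and $E'\cong E'_{\min}\oplus C'$ with $E_{\min},E'_{\min}$ minimal and $C,C'$ contractible free differential modules (split off using units in the differentials). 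Since $C,C'$ are acyclic, $E_{\min}\simeq E\simeq E'\simeq E'_{\min}$; and because quasi-isomorphic minimal free differential modules over a graded-local ring are isomorphic (uniqueness of minimal free resolutions of differential modules; see~\cite{brown2021minimal}), $E_{\min}\cong E'_{\min}$. But $E$ and $E'$ have the same underlying free module $\bigoplus_i F_i(ia)$, so $E_{\min}\oplus C$ and $E'_{\min}\oplus C'$ are isomorphic free modules with $E_{\min}\cong E'_{\min}$; hence $C\cong C'$, and therefore $E\cong E'$. This proves the bijection, hence the dimension bounds.

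For the rigidity statement I would argue as follows. By the last clause of Theorem~\ref{thm:varietyOfIsoClasses}, $\dim\calX^\bfF_a=0$ exactly when $\calX^\bfF_a$ is the single point $\fold^a(\bfF)$. The degree $a$ fold of the augmentation $\bfF\to M$ is a quasi-isomorphism $\fold^a(\bfF)\to M$, with $M$ carrying the zero differential, so this point corresponds to the quasi-isomorphism class of $M$ itself; conversely, if $M$ (with zero differential) is the only quasi-isomorphism class of a degree $a$ differential module with homology $M$, then $\calX^M_a$ is a single point of dimension $0$. I expect injectivity to be the main obstacle: the delicate points are invoking uniqueness of minimal models in the full generality of differential modules (not just complexes), and --- in order to pass from $E_{\min}\cong E'_{\min}$ to $C\cong C'$ --- genuinely exploiting that anchored free flags with a common anchor have a common underlying free module.
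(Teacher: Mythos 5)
Your argument is correct and takes essentially the same route as the paper: apply Theorem \ref{thm:varietyOfIsoClasses} to the minimal free resolution $\bfF$ of $M$, identify $H^i(\End_S^\bullet(\bfF))$ with $\ext_S^i(M,M)$, and use Theorem \ref{thm:danMikeThm} together with Proposition \ref{prop:flagsAnchoredOnRes} to identify points of $\calX^\bfF_a$ with quasi-isomorphism classes, handling the $\dim=0$ clause exactly as the paper does. The only difference is one of detail: you spell out the injectivity step (quasi-isomorphic flags anchored on $\bfF$ are isomorphic) via minimization, uniqueness of minimal free resolutions, and cancellation of the split trivial summands, whereas the paper subsumes this identification into the cited Brown--Erman results.
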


\begin{proof}
    By the Brown-Erman result (Theorem \ref{thm:danMikeThm}) the set of quasi-isomorphism classes of differential modules with fixed homology $M$ is equivalently the set of all isomorphism classes of free flags anchored on a minimal free resolution $\bfF$ of $M$. One only needs to check that the constructions of subsection \ref{subsec:buildFreeFlag} are independent of the resolution $\bfF$, but this is a consequence of Proposition \ref{prop:flagsAnchoredOnRes} combined with the independence of $\ext$ on the chosen free resolution of $M$.

    Assume now that $\dim \calX^M_a = 0$. Suppose $D$ is any differential module with $H(D) = M$ and let $F$ be any free flag resolution of $D$ anchored on a minimal free resolution $\bfF$ of $M$. By Theorem \ref{thm:varietyOfIsoClasses} the free flag $F$ is isomorphic to $\fold^a (\bfF)$, in which case $D$ is quasi-isomorphic to $\fold^a (\bfF)$. Since $\fold^a (\bfF)$ is evidently quasi-isomorphic to $M$, the result follows.
\end{proof}

It turns out that we can also obtain dimension bounds for arbitrary (not necessarily homogeneous) differential modules with finite length homology:

\begin{theorem}\label{thm:theUngradedCase}
Assume that $\bbk$ is algebraically closed and $\bfF$ has finite length homology. The set of isomorphism classes of free flags in $\DM (S)$ anchored on $\bfF$ is parameterized by a nonempty variety $\calX^\bfF$ whose dimension satisfies the bounds
\[
\dim_\bbk H^2 (\End_S (\bfF)) + \dim_\bbk H^3 (\End_S (\bfF)) \leq \dim \calX^\bfF \leq \sum_{i=2}^\ell \dim_\bbk H^i (\End_S (\bfF)).
\]
\end{theorem}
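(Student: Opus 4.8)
The strategy is to reprise the inductive construction from the proof of Theorem \ref{thm:varietyOfIsoClasses} essentially verbatim, after deleting every internal-degree subscript, and to supply the one new ingredient that makes the argument run without a grading: the finite-dimensionality over $\bbk$ of each cohomology group $H^i(\End_S^\bullet(\bfF))$. The first thing I would record is that all of Section \ref{subsec:buildFreeFlag} is insensitive to the grading. Construction \ref{const:aff}, with $\fold_a(\bfF)$ replaced by $\bfF$ itself regarded as the differential module $\bigoplus_i F_i$, together with Lemmas \ref{lem:obstructions}, \ref{lem:param}, \ref{lem:ext} and \ref{lem:build-aff}, all go through word for word in $\DM(S)$: the only appearances of the internal grading in those proofs are the closing sentences tracking the internal degree of $\delta_j$ via Remark \ref{rmk:degree}, which play no role in the logic. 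Consequently, for a free flag built over $S_i$ the obstruction to lifting to $S_{i+1}$ is a class in $H^{i+2}(\End_S^\bullet(\bfF))$, the set of lifts up to isomorphism is a torsor over $H^{i+1}(\End_S^\bullet(\bfF))$, and every free flag anchored on $\bfF$ arises from this iterative process.

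Next I would establish the finiteness. Since $\bfF$ has finite length homology, fix $c$ with $\m^c H(\bfF) = 0$; if $\bfF$ has $n$ nonzero terms, then a sufficiently high power of $\m$ (e.g. $\m^{cn}$) acts null-homotopically on $\bfF$, and hence annihilates $H^i(\End_S^\bullet(\bfF)) = \hom_{\Db(S)}(\bfF,\bfF[i])$. As each $\End_S^i(\bfF) = \prod_n \hom_S(F_n,F_{n-i})$ is a finitely generated free $S$-module (the product being finite since $\bfF$ is bounded), the subquotient $H^i(\End_S^\bullet(\bfF))$ is finitely generated over $S$ and killed by an $\m$-primary ideal, hence of finite length, hence finite-dimensional over $\bbk$. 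One also notes $\End_S^j(\bfF) = 0$ for $j > \ell$, so $H^j(\End_S^\bullet(\bfF)) = 0$ for $j > \ell$. With this, for each $i$ one may form the affine space $E^i := \operatorname{Cl}(\Spec(\Sym^*(H^i(\End_S^\bullet(\bfF)))))$ exactly as in the graded proof, now with no degree decoration, and $E^j = 0$ for $j>\ell$.

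With the pieces in place I would run the induction: set $X_1 := \{\bfF\}$, let $\nu_i \colon X_i \to E^{i+2}$ send a free flag over $S_i$ to the obstruction class of equation (\ref{eq:obstruction}) (polynomial in the coordinates, hence a morphism), put $K_i := \nu_i^{-1}(0) \subseteq X_i$ and $X_{i+1} := K_i \times E^{i+1}$, and define $\calX^\bfF := X_\ell$. Nonemptiness is immediate: the point of $X_\ell$ with all coordinates zero corresponds to $\bfF$ itself as a free flag anchored on $\bfF$. The nested inclusion $X_i \subseteq X_1 \times E^2 \times \cdots \times E^i$ yields the upper bound
\[
\dim \calX^\bfF \leq \sum_{i=2}^\ell \dim_\bbk H^i(\End_S^\bullet(\bfF)).
\]
For the lower bound, the fiber dimension formula gives $\dim K_i \geq \dim K_{i-1} + \dim E^i - \dim E^{i+2}$ for $i \geq 2$, with $K_1 = X_1$ a point. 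Here is the only place the argument genuinely simplifies relative to the graded case: with no internal degree to separate the copies of $E^i$, telescoping this chain down from $i = \ell$ collapses — every term $\dim E^i$ with $4 \leq i \leq \ell$ cancels against the $-\dim E^{i+2}$ (shifted) contributions, $E^{\ell+1}$ vanishes, and the $\dim E^\ell$ coming from $X_\ell = K_{\ell-1}\times E^\ell$ cancels the residual $-\dim E^\ell$ — leaving
\[
\dim \calX^\bfF \geq \dim E^2 + \dim E^3 = \dim_\bbk H^2(\End_S^\bullet(\bfF)) + \dim_\bbk H^3(\End_S^\bullet(\bfF)).
\]

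The main obstacle is precisely the finiteness step of the second paragraph: without the finite-length hypothesis there is no reason for the parameter space to be of finite type over $\bbk$, since before passing to isomorphism the higher differentials $\delta_j$ have entries ranging over all of $S$. In the graded setting this finiteness was automatic because one worked inside a single graded strand $H^i(\End_S^\bullet(\bfF))_j$ of a finitely generated module; in the ungraded setting it must be extracted from the $\m$-primary support of the homology. Everything else is a matter of erasing degree subscripts throughout Section \ref{sec:constructingFreeFlag} and re-running the dimension count; I would not expect to need the analogue of the final ``$\dim = 0$ implies a point'' statement of Theorem \ref{thm:varietyOfIsoClasses}, which is why the infiniteness of $\bbk$ is not invoked here.
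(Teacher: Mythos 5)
Your proposal is correct and follows essentially the same route as the paper, whose proof of this theorem simply observes that the constructions of subsection \ref{subsec:buildFreeFlag} never use the grading and then reruns the argument of Theorem \ref{thm:varietyOfIsoClasses}, with the finite length hypothesis guaranteeing finite-dimensionality of the relevant vector spaces. You merely make explicit two points the paper leaves implicit—the $\m$-power null-homotopy argument for finiteness of $H^i(\End_S^\bullet(\bfF))$ and the telescoping of the dimension count down to $\dim_\bbk H^2 + \dim_\bbk H^3$—both of which are accurate.
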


\begin{proof}
Observe first that the constructions and results of subsection \ref{subsec:buildFreeFlag} are inherently independent of the homogeneity assumption, in which case we may employ the same argument as in Theorem \ref{thm:varietyOfIsoClasses}. The finite length assumption ensures that all vector spaces involved are finite dimensional.
\end{proof}

\section{Examples and Applications}\label{sec:Applications}
In this section, we discuss some of the applications of our results including rigidity conditions for free complexes, total Betti numbers of differential modules, and matrix factorizations.

\subsection{Rigid resolutions}

In general, the quasi-isomorphism class of a degree $a$ differential module is only controlled up to some choice of free flag anchored on a resolution (this is the content of \cite{brown2021minimal}), but it is useful to know when one can guarantee that there are \emph{no} nontrivial free flags anchored on a given complex. This is precisely the notion of $a$-rigidity:

\begin{definition}\label{def:aRigid}
    A homogeneous complex $\bfF$ is $a$-\emph{rigid} if the only isomorphism class of a degree $a$ free flag anchored on $\bfF$ is the degree $a$ fold $\fold^a (\bfF)$. A homogeneous $S$-module $M$ is $a$-rigid if some (equivalently every) homogeneous free resolution of $M$ is $a$-rigid.
\end{definition}

\begin{remark}
    Another description of $a$-rigidity may be given as follows: consider the homology functor $H : \dm (S,a) \to S\text{-mod}$ which sends a given differential module $D$ to its homology $H(D)$. Then a given $S$-module $M$ is $a$-rigid if the ``fiber" above $M$ is (essentially) contained inside the category of chain complexes, where $\ch (S) \subset \dm (S,a)$ as in Remark \ref{rem:embeddingCh(S)}. 
\end{remark}

Our results also allow us to generalize a result previously proved in \cite{banks2022differential}, and in fact we can give a full characterization of $a$-rigid Artinian complete intersections: 

\begin{theorem}\label{thm:CIrigidity}
Let $S = \bbk [x_1 , \dots , x_n]$ ($n \geq 2$) be a standard-graded polynomial ring and $M = S/I$ be an Artinian complete intersection, where $I = (f_1 , \dots , f_n)$ is generated by a homogeneous regular sequence with degrees $\underline{d} := (d_1 \leq d_2 \leq \cdots \leq d_n)$ (where $d_i := \deg f_i$). Then:
\begin{enumerate}
    \item If $I$ is not generated by linear forms and 
    \[
    a < d_1+d_2+n - |\underline{d}| \quad\text{or}
    \quad a > d_{n-1} + d_n
    \]
    then $S/I$ is $a$-rigid.
    \item If $I$ is generated by linear forms, then $S/I$ is $a$-rigid for all $a \neq 2$.
\end{enumerate}
\end{theorem}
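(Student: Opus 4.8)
The plan is to reduce both parts to a single graded‑vanishing statement for $\ext^\bullet_S(S/I,S/I)$ and then invoke the rigidity criterion coming out of Subsection~\ref{subsec:buildFreeFlag}. Since $M=S/I$ is a complete intersection, its minimal free resolution $\bfF$ is the Koszul complex on $f_1,\dots,f_n$, with $F_p=\bigoplus_{|J|=p}S(-d_J)$ where $d_J:=\sum_{j\in J}d_j$; as every entry of the Koszul differential lies in $I$, the complex $\hom_S(\bfF,S/I)$ has zero differential and hence
\[
\ext^p_S(M,M)=\bigoplus_{|J|=p}(S/I)(d_J).
\]
Moreover $S/I$ is an Artinian Gorenstein graded ring whose Hilbert function is supported \emph{exactly} in internal degrees $0,1,\dots,s$, where $s:=|\underline{d}|-n$ is the socle degree (immediate from the Hilbert series $\prod_{j=1}^{n}(1+t+\cdots+t^{d_j-1})$). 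Consequently
\[
\ext^i_S(M,M)_{a-ia}=\bigoplus_{|J|=i}(S/I)_{d_J-a(i-1)},
\]
which vanishes precisely when $d_J-a(i-1)\notin\{0,1,\dots,s\}$ for every $J$ with $|J|=i$. By Lemmas~\ref{lem:build-aff}, \ref{lem:param} and~\ref{lem:ext} (cf.\ Corollary~\ref{cor:theMFRcase}), if this vanishing holds for all $2\le i\le n=\pdim M$ then the only degree-$a$ free flag anchored on $\bfF$ up to isomorphism is $\fold^a(\bfF)$; that is, $M$ is $a$-rigid. Note that this implication needs no hypothesis on $\bbk$, since the relevant deformation spaces are then all zero.

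For part~(1) I would split into the two stated regimes. If $a>d_{n-1}+d_n$, then for $2\le i\le n$ and $|J|=i$ one has $d_J\le d_{n-i+1}+\cdots+d_n\le(i-1)d_{n-1}+d_n$, and replacing $a$ by its lower bound gives $d_J-a(i-1)<(2-i)d_n\le 0$; hence $\ext^i_S(M,M)_{a-ia}=0$ for all such $i$ and $M$ is $a$-rigid. If instead $a<d_1+d_2+n-|\underline{d}|=d_1+d_2-s$, then $d_J\ge d_1+\cdots+d_i$ for $|J|=i$, and here the assumption that $I$ is \emph{not} generated by linear forms enters decisively: it forces $d_n\ge 2$, so $s=\sum_{j}(d_j-1)\ge(d_1-1)+(d_n-1)\ge d_1$. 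The elementary chain
\[
(i-2)(d_1+d_2-s)\;\le\;(i-2)d_2\;\le\;d_3+\cdots+d_i
\]
(valid for $2\le i\le n$, using $d_1\le s$ and $d_2\le d_3\le\cdots\le d_i$) rearranges to the bound $(d_1+\cdots+d_i)-(d_1+d_2-s)(i-1)\ge s$; together with $a<d_1+d_2-s$ this yields $d_J-a(i-1)\ge(d_1+\cdots+d_i)-a(i-1)>s$, so again $\ext^i_S(M,M)_{a-ia}=0$ for all $2\le i\le n$ and $M$ is $a$-rigid.

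For part~(2), if $I$ is generated by linear forms then $f_1,\dots,f_n$ form a linear system of parameters, so $M=S/I\cong\bbk$ and its minimal free resolution is the Koszul complex on $x_1,\dots,x_n$; thus $\ext^i_S(\bbk,\bbk)$ is concentrated in internal degree $-i$ (with dimension $\binom{n}{i}$). Hence $\ext^i_S(\bbk,\bbk)_{a-ia}\ne 0$ forces $a-ia=-i$, i.e.\ $a=i/(i-1)$, and for $2\le i\le n$ this has an integer solution only when $i=2$ (giving $a=2$). Therefore for every integer $a\ne 2$ all the groups $\ext^i_S(\bbk,\bbk)_{a-ia}$ with $2\le i\le n$ vanish, so $S/I=\bbk$ is $a$-rigid.

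Most of this is routine Koszul‑and‑Gorenstein bookkeeping; the step I expect to require genuine care is the ``small-$a$'' regime of part~(1), where one must verify that the threshold at $i=2$, namely $d_1+d_2-s$, is the binding one for all $2\le i\le n$---and this is exactly where the inequality $s\ge d_1$ (equivalently $d_n\ge 2$) is used. It is also why the linear case genuinely differs and must be isolated, since there $s=0<d_1$. Finally, if one wanted the full \emph{characterization} (that these conditions are also necessary for $a$-rigidity), one would additionally have to extract nontriviality of $\calX^\bfF_a$ from the lower bound in Theorem~\ref{thm:varietyOfIsoClasses}, checking that for $d_1+d_2-s\le a\le d_{n-1}+d_n$ the correction terms $\dim_{\bbk}H^i(\End_S(\bfF))_{2a-ia}$ with $i\ge 4$ do not cancel $\dim_{\bbk}\ext^2_S(M,M)_{-a}$; I would treat that as a separate final step.
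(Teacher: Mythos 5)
Your proposal is correct and follows essentially the same route as the paper: reduce $a$-rigidity to the vanishing of $\ext^i_S(S/I,S/I)_{a-ia}$ for $2\le i\le n$, compute these Ext groups from the Koszul resolution as copies of $S/I$ twisted by $d_J$, use the socle degree $s=|\underline{d}|-n$ to locate the nonzero range, and treat the linear case separately via concentration in degree $-i$. The only difference is minor bookkeeping in the small-$a$ regime (you use $d_1\le s$ and a per-$i$ chain of inequalities, while the paper derives $d_2\le s$ and bounds $d_1+\tfrac{n-|\underline{d}|+d_2}{i-1}$), and both are valid.
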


\begin{remark}
    The hypotheses of Theorem \ref{thm:CIrigidity} omit the case $n=1$ for the simple reason that this case is trivial. If $n=1$, then a free flag is equivalently a complex, so $S/I$ is $a$-rigid for all $a \in \bbz$. Likewise, if $n=2$ then the isomorphism classes of free flags are in direct bijection with the Ext classes $\ext^2 (S/I, S/I)$, so the statement really only needs to be proved for $n \geq 3$. 
\end{remark}

\begin{remark}
    The quantity $|\underline{d}| - n$ is the \emph{socle degree} of the complete intersection $S/I$, typically denoted $\socdeg (S/I)$. This means that the bounds of Theorem \ref{thm:CIrigidity}$(i)$ may be instead written as
    $$a <  d_1 + d_2 - \socdeg (S/I ) \quad \text{or} \quad  a > d_{n-1} + d_n \}.$$
\end{remark}

A quick example illustrating the statement of Theorem \ref{thm:CIrigidity} may be useful before the proof:

\begin{example}
    Suppose that $I$ is a complete intersection with degree sequence $\underline{d} = (2,2,5,7,9)$ (where $S/I$ is Artinian). Then
    $$\socdeg(S/I) = |\underline{d}| - n = 25 - 5 = 20,$$
    $$d_1+d_2 - \socdeg(I) = 4-20 = -16, \quad \text{and} \quad d_{n-1}+d_n = 7 + 9 = 16.$$
    Thus $S/I$ is $a$-rigid for all 
    $a<-16$ and all $a>16$.
\end{example}

\begin{proof}[Proof of Theorem \ref{thm:CIrigidity}]
    We will actually prove a much more precise bound, then specialize to the case at hand. It is well-known that for a finite length complete intersection $S/I$ over a polynomial ring, the maximal nonzero degree of $S/I$ is precisely $|\underline{d}| - n$. Moreover, there is an isomorphism
    $$\ext^i_S (S/I,S/I) \cong S/I \otimes_S \bigwedge^\bullet V,$$
    where $V = \bigoplus_{i=1}^n Se_i$ is a free $S$-module on basis elements $e_i$ assigned internal degree $\deg (e_i) = -d_i$. For every $1 \leq i \leq n$, there is a direct sum decomposition
    $$\bigwedge^i V = \bigoplus_{1 \leq j_1 < \cdots < j_i \leq n} S e_{j_1} \wedge \cdots \wedge e_{j_i},$$
    in which case
    $$\ext^i_S (S/I,S/I) = \bigoplus_{1 \leq j_1 < \cdots < j_i \leq n} S/I e_{j_1} \wedge \cdots \wedge e_{j_i}.$$
    Let $\ell_i$ (respectively $r_i$) be the minimal (respectively maximal) internal degree of $\bigwedge^i V$. In order for $\ext^i_S (S/I, S/I)_{a-ia}$ to be $0$, there must be strict inequalities
    \begin{equation} a-ai > |\underline{d}| - n + r_i \quad \text{or} \quad a - ai < \ell_i.\end{equation}\label{eqn:theIneqs}

    \textbf{Proof of (i):} Notice that $r_i \leq -(i-1) d_1 - d_2$ and $\ell_i \geq -(i-1) d_n - d_{n-1}$, in which case substituting these into the inequalities (\ref{eqn:theIneqs}) implies that $S/I$ will be $a$-rigid if the following inequalities hold for all $2 \leq i \leq n$:
    $$a - ai > |\underline{d}| - n - (i-1)d_1 - d_2 \quad \text{or} \quad a - ai < -(i-1) d_n - d_{n-1}.$$
    Rearranging yields the inequalities
    $$a < d_1 + \frac{n - |\underline{d}| + d_2}{i-1}, \quad a >  d_n + \frac{ d_{n-1}}{i-1}.$$
    The quantity $d_n +\frac{ d_{n-1}}{i-1}$ is evidently maximal for $i = 2$, yielding $a > d_n + d_{n-1}$. On the other hand, we claim that $n - |\underline{d}| + d_2 \leq 0$.

    To see this, suppose for sake of contradiction that $|\underline{d}| < n + d_2$. Since $ d_1 + (n-1)d_2 \leq |\underline{d}|$, it follows that $\frac{d_1}{n-2} + d_2 <  \frac{n}{n-2}$. If $n \geq 4$, this implies that $d_2 < 2$ and hence $d_1 = d_2 = 1$. Likewise, if $n = 3$ then $d_1 + d_2 <3$, again implying that $d_1 = d_2 = 1$ and hence $|\underline{d}| < n+1$.
    
    Since each $d_j > 0$, it follows that $|\underline{d}| = n$ and $d_j = 1$ for all $1 \leq j \leq n$. This contradicts the assumption that $I$ is not generated by linear forms, so $n - |\underline{d}| + d_2 < 0$ and hence there is an inequality 
    $$d_1 + \frac{n - |\underline{d}| + d_2}{i-1} \geq d_1+d_2 +n - |\underline{d}| .$$
    It follows that if $a < d_1+d_2 +n - |\underline{d}|$ or $a > d_{n-1}+d_n$, then $S/I$ is $a$-rigid.

    \textbf{Proof of (ii):} This proof is essentially identical but much simpler: if $I$ is generated by linear forms, then $\ext^i_S (S/I , S/I)$ is generated in internal degree $-i$ for all $2 \leq i \leq n$. This means that if $S/I$ is $a$-rigid for some $a$, there is an equality $a - ai = -i$ for some $2 \leq i \leq n$. Rearranging this yields the equality $i = \frac{a}{a-1}$; since $i$ must be an integer, it follows that $a = i =2$.
\end{proof}

As it turns out, we can upgrade Theorem \ref{thm:CIrigidity} to an equivalence:

\begin{corollary}\label{cor:backwardsCIRigidity}
    The statement of Theorem \ref{thm:CIrigidity} is an equivalence. More precisely: adopt notation and hypotheses as in Theorem \ref{thm:CIrigidity}. If
    \[
    d_1+d_2+n - |\underline{d}| \leq a \leq d_{n-1} + d_n
    \]
    then $S/I$ is not $a$-rigid.
\end{corollary}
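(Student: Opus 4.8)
The plan is to establish the failure of $a$-rigidity directly, by exhibiting for each $a$ in the stated range a free flag anchored on the minimal free resolution $\bfF = K(f_1,\dots,f_n)$ of $S/I$ (the Koszul complex, so $\ell = n$) that is not isomorphic to $\fold^a(\bfF)$; by Definition \ref{def:aRigid} this is exactly what is wanted. It suffices to find a map $\delta_2\colon\bfF\to\bfF$ lowering homological degree by $2$ and of internal degree $-a$ with $\delta_2\circ\delta_2 = 0$ whose class in $H^2(\End_S^\bullet(\bfF))_{-a}\cong\ext_S^2(S/I,S/I)_{-a}$ is nonzero. Indeed, feeding such a $\delta_2$ into Construction \ref{const:aff} together with $\delta_j = 0$ for all $j\ge 3$ succeeds: the obstruction equation (\ref{eq:obstruction}) at the first step is exactly the (sign-adjusted) cocycle condition on $\delta_2$, and at every later step it reduces to $0 = 0$ since $\delta_2^2 = 0$ and all higher $\delta_j$ vanish. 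The resulting free flag $D$ is anchored on $\bfF$, and since its first structure map $\delta_2$ is not nullhomotopic, Lemma \ref{lem:param} applied at step $1$ (exactly as in the last paragraph of the proof of Theorem \ref{thm:varietyOfIsoClasses}) shows $D\not\cong\fold^a(\bfF)$.

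To produce $\delta_2$, recall the identification $\ext_S^\bullet(S/I,S/I)\cong S/I\otimes\bigwedge^\bullet V$ with $\deg e_j = -d_j$ from the proof of Theorem \ref{thm:CIrigidity}; under the standard description of the endomorphism DGA of a Koszul complex on a regular sequence, the generator $e_j$ is represented by the contraction operator $\iota_j\in\End_S^1(\bfF)$ against the $j$th dual basis vector, and these satisfy $\iota_j^2 = 0$ and $\iota_j\iota_k = -\iota_k\iota_j$. Hence for any $j < k$ the composite $\iota_j\iota_k\in\End_S^2(\bfF)$ is a cocycle representing $e_j\wedge e_k$ with $(\iota_j\iota_k)^2 = 0$. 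It remains to choose $(j,k)$ and a scalar fixing the internal degree, and for this I claim that $\ext_S^2(S/I,S/I)_{-a} = \bigoplus_{j<k}(S/I)_{d_j+d_k-a}$ is nonzero throughout the range. Write $\sigma := |\underline d| - n$ for the socle degree, so $(S/I)_m\ne 0$ exactly for $0\le m\le\sigma$. If $a\le d_1+d_2$, the hypothesis gives $0\le d_1+d_2-a\le\sigma$, so the pair $(1,2)$ works; if $a\ge d_1+d_2$, walk along the chain of pairs $(1,2),(1,3),\dots,(1,n),(2,n),\dots,(n-1,n)$, whose sums $d_j+d_k$ increase monotonically from $d_1+d_2$ to $d_{n-1}+d_n$ by steps $d_{i+1}-d_i\le d_{i+1}-1\le\sigma$, so some pair on the chain has $d_j+d_k-a\in[0,\sigma]$. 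Picking such $(j,k)$, a nonzero $\bar c\in(S/I)_{d_j+d_k-a}$, and any lift $c\in S$, the element $\delta_2 := c\,\iota_j\iota_k$ has internal degree $-a$, satisfies $\delta_2^2 = c^2(\iota_j\iota_k)^2 = 0$, and has nonzero class $\bar c\,(e_j\wedge e_k)$ since $\bigwedge^\bullet V$ is $S/I$-free.

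The step I expect to be the main obstacle is the sign bookkeeping connecting the ``chain map'' cocycle condition satisfied by $\iota_j\iota_k$ (vanishing of the \emph{graded commutator} with $d^\bfF$) to the condition $d^\bfF\delta_2 + \delta_2 d^\bfF = 0$ (vanishing of the \emph{anticommutator}) that the component maps of a flag differential must obey: passing between the two is the standard sign twist by $(-1)^{(\text{homological degree})}$ on the components of $\delta_2$, equivalently the bookkeeping encoded by the rescaling variable $t$ in Construction \ref{const:aff}, and one must check it affects neither $\delta_2^2 = 0$ nor the cohomology class. A lesser, entirely standard point is the identification of the contraction operators $\iota_j$ with the algebra generators of $\ext_S^\bullet(S/I,S/I)$ used above. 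With these in place the argument closes: $S/I$ fails to be $a$-rigid for every $a$ in the stated interval, which is the asserted equivalence with Theorem \ref{thm:CIrigidity}.
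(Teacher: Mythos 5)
Your construction is, at its core, the same as the paper's: for each $a$ in the interval you exhibit a nonzero class in $\ext^2_S(S/I,S/I)_{-a}\cong (S/I\otimes\bigwedge^2 V)_{-a}$ that admits a representative squaring to zero on the nose, take all higher structure maps to be zero, and conclude non-rigidity. The differences are in the packaging. First, the paper's representatives are the elements $m_j\,e_i\wedge e_{i+1}$ for \emph{consecutive} index pairs, with the interval-covering handled by the inequality $d_i+d_{i+1}\geq d_{i+1}+d_{i+2}-\socdeg(S/I)$; you instead use $c\,\iota_j\iota_k$ (products of Koszul contractions) along the longer monotone chain $(1,2),(1,3),\dots,(1,n),(2,n),\dots,(n-1,n)$ with step size at most the socle degree — the same covering argument in substance, and your explicit contraction operators make the ``squares to zero as an honest map'' point transparent; your argument also covers case (ii) of Theorem \ref{thm:CIrigidity} uniformly ($\sigma=0$, $a=2$). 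Second, and more substantively, for the final step ($D\not\cong\fold^a(\bfF)$) the paper invokes Theorem 3.6 of \cite{banks2022differential}, an external result, whereas you argue internally via Lemma \ref{lem:param} exactly as in the last paragraph of the proof of Theorem \ref{thm:varietyOfIsoClasses}; this makes the corollary self-contained, at the cost of inheriting the same implicitly used facts there (that an isomorphism over $S$ passes to the $t$-deformed flags over each $S_i$, and the ``only if'' direction of the parametrization in Lemma \ref{lem:param}), so you are at the same level of rigor as the paper rather than above it. Finally, the commutator-versus-anticommutator sign twist you flag is also glossed in the paper (``up to some sign changes which we will not get into''), and your proposed fix — twisting $\delta_2$ by $(-1)^i$ on $F_i$, which changes neither $\delta_2^2=0$ nor the (non)triviality of the class — is the standard and correct one. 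No gaps beyond those the paper itself tolerates.
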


\begin{proof}
    The proof follows by constructing an explicit free flag anchored on the Koszul complex $K_\bullet$ resolving $S/I$ that is not isomorphic to $\fold (K_\bullet)$ for every $a$ in the appropriate interval. To do this, it is necessary to make some initial observations. Note first that $(S/I)_j \neq 0$ for all $0 \leq j \leq \socdeg (S/I)$, so for each $j$ we may fix a nonzero element $m_j \in (S/I)_j$. With notation as in the proof of Theorem \ref{thm:CIrigidity}, consider the elements
    $$m_j e_{i} \wedge e_{i+1} \in S/I \otimes_S \bigwedge^2 V, \quad 0 \leq j \leq \socdeg(S/I), \ 1 \leq i \leq n-1.$$
    Notice that each of these elements squares to $0$, even when viewed in $\bigwedge^\bullet V$, and hence induce a well-defined differential module by choosing all higher structure maps (of degree $ > 2$) to be $0$. Moreover, notice that for all $1 \leq i \leq n-2$ there is an inequality
    \begin{equation}\label{eqn:theNextIneq} 
    d_i + d_{i+1} \geq d_{i+1} + d_{i+2} - \socdeg (S/I)\end{equation}
    since rearranging the above expression yields the inequality
    $$n - d_{i} \leq |\underline{d}| - d_{i+2}.$$
    The righthand side of this inequality is at least $n-1$, and since $d_i \geq 1$ we also know that the lefthand side is at most $n-1$. With all of these ingredients, the proof concludes as follows: as $i$ and $j$ range through the values $1 , \dots , n$ and $0 , \dots , \socdeg(S/I)$, respectively, the elements $m_j e_{i} \wedge e_{i+1} \in \ext^2_S (S/I,S/I)_{-d_i-d_{i+1} + j}$ induce degree $a$ differential modules for all  $d_1 + d_2 - \socdeg(S/I) \leq a \leq d_{n-1} + d_n$. 
    The fact that $a$ must range through all values in this interval follows from the inequality (\ref{eqn:theNextIneq}) along with a straightforward induction on $i$.

    Finally, notice that each of the differential modules induced by the elements $m_j e_i \wedge e_{i+1}$ are not isomorphic to a fold of the Koszul complex, since Theorem 3.6 of \cite{banks2022differential} would imply that $m_j \in I$, a contradiction to the fact that $m_j$ represents a nonzero element in $(S/I)_j$ by selection.
\end{proof}

The following example should help to illustrate the proof of Corollary \ref{cor:backwardsCIRigidity} more clearly:

\begin{example}
    Let $S = \bbk [x_1 , x_2 , x_3]$ and $I = (x_1^2,x_2^2 , x_3^3)$. The socle degree of $S/I$ is $4$, in which case Corollary \ref{cor:backwardsCIRigidity} guarantees that we should be able to construct homogeneous free flags of degrees $\{ 0,1 , 2 , 3 , 4,5 \}$. To do this, choose elements:
    $$1 \in (S/I)_0, \quad x_1 \in (S/I)_1 , \quad x_1x_2 \in (S/I)_2 , \quad x_1x_2x_3 \in (S/I)_3, \quad x_1x_2x_3^2 \in (S/I)_4$$
    (note that the choice of these elements is arbitrary -- any nonzero choices will do). We then consider the collection of elements in $\ext^2_S (S/I,S/I)$:
    $$\underbrace{x_1 x_2 x_3^2 e_1 \wedge e_2}_{\text{degree} \ 0}, \quad \underbrace{x_1 x_2 x_3 e_1 \wedge e_2}_{\text{degree} \ -1}, \quad \underbrace{x_1 x_2 e_1 \wedge e_2}_{\text{degree} \ -2}, \quad \underbrace{x_1 e_1 \wedge e_2}_{\text{degree} \ -3} , \quad \underbrace{e_1 \wedge e_2}_{\text{degree} \ -4},$$
    $$\underbrace{x_1 x_2 x_3^2 e_2 \wedge e_3}_{\text{degree} \ -1}, \quad \underbrace{x_1 x_2 x_3 e_2 \wedge e_3}_{\text{degree} \ -2}, \quad \underbrace{x_1 x_2 e_2 \wedge e_3}_{\text{degree} \ -3}, \quad \underbrace{x_1 e_2 \wedge e_3}_{\text{degree} \ -4} , \quad \underbrace{e_2 \wedge e_3}_{\text{degree} \ -5}.$$
    Notice that there is redundancy in the degrees of the differential modules induced by the Ext elements in this collection, but they still range through \emph{all} values in the interval $\{ 0 , 1 , 2 , 3 , 4 , 5 \}$.
\end{example}

\begin{remark}
    The construction of Corollary \ref{cor:backwardsCIRigidity} actually generalizes the construction of \cite[Proposition 3.8]{banks2022differential}, since the differential modules constructed there are minimizations of the free flag induced by choosing $e_1 \wedge e_2 \in \ext^2_S (\bbk , \bbk )_2$. 
\end{remark}

It is not difficult to see that the construction of Corollary \ref{cor:backwardsCIRigidity} does not yield \emph{all} possible free flags of a given degree, it is only a sufficiency result that guarantees nonrigidity within a given degree range. The following example helps illustrate this:

\begin{example}
    Let $S = \bbk [x_1 , \dots , x_4]$ and $I = (x_1^2 , \dots , x_4^2)$, and assume that $\bbk$ has odd characteristic. Consider the elements
    $$f_1 := x_1 e_1 + x_2 e_2 + x_3 e_3 + x_4 e_4, \quad \text{and}$$
    $$f_2 := x_1x_2x_3x_4 (e_1 \wedge e_2 + e_3 \wedge e_4 ) \in \bigwedge^2 V.$$
    Notice that $f_2$ squares to $0$ after descending to the Ext algebra, but it does \emph{not} square to $0$ when viewed as an element in the exterior algebra $\bigwedge^\bullet V$, since
    $$\left( x_1x_2x_3x_4 (e_1 \wedge e_2 + e_3 \wedge e_4 ) \right)^2 = 2 x_1^2 x_2^2 x_3^2 x_4^2 e_1 \wedge e_2 \wedge e_3 \wedge e_4 \neq 0.$$
    Define
    $$f_3 := x_1x_2x_3x_4 \left(x_2 x_3 x_4 e_2 \wedge e_3 \wedge e_4   - x_1x_3x_4 e_1 \wedge e_3 \wedge e_4 + x_1x_2 x_4 e_1 \wedge e_2 \wedge e_4 - x_1 x_2 x_3 e_1 \wedge e_2 \wedge e_3\right)$$
    and notice that $f_1 \cdot f_3 - f_3 \cdot f_1 = f_2^2$. Since $f_2 \cdot f_3 + f_3 \cdot f_2 = 0$, it follows that the elements $f_1 , f_2,$ and $f_3$ induce a well-defined Koszul differential module as in \cite[Proposition 4.2]{banks2022differential}, and moreover this differential module is homogeneous of degree $0$. It is not isomorphic to any of the free flags of Corollary \ref{cor:backwardsCIRigidity}, however.
\end{example}

Next, we consider the problem of understanding free flags anchored on complexes with finite length homology. Free flags anchored on arbitrary complexes are a little more subtle than being anchored on a free resolution, since we do not have complete control over how the homology changes as the higher structure maps are added iteratively. However, we do at least know that the isomorphism classes are parametrized by modules concentrated in only finitely many internal degrees. This immediately yields:

\begin{theorem}\label{thm:finiteLengthAsymptotics}
Let $\bfF$ be any finite length complex of free $S$-modules with finite length homology. Then $\bfF$ is $a$-rigid for $a \gg 0$.
\end{theorem}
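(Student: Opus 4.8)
The plan is to deduce the theorem from the machinery of subsection~\ref{subsec:buildFreeFlag} by reducing $a$-rigidity of $\bfF$ to the vanishing of finitely many graded pieces of the cohomology of the endomorphism complex, and then to observe that those pieces vanish automatically once $a$ is large. First I would record the following consequence of Construction~\ref{const:aff}: if $\bfF$ has length $\ell$ and $H^i(\End_S^\bullet(\bfF))_{a-ia}=0$ for every $2\leq i\leq\ell$, then $\bfF$ is $a$-rigid. Indeed, Lemma~\ref{lem:build-aff} says that every degree $a$ free flag anchored on $\bfF$ arises from Construction~\ref{const:aff}, while Lemmas~\ref{lem:obstructions}, \ref{lem:param}, and~\ref{lem:ext} identify the isomorphism classes of lifts available at the $i$-th step of the construction with $H^{i+1}(\End_S^\bullet(\bfF))_{a-(i+1)a}$; when this group vanishes the lift is unique up to isomorphism, so starting from $\fold^a(\bfF)$ the construction is forced to reproduce $\fold^a(\bfF)$ at every stage. (This is essentially the argument in the last paragraph of the proof of Theorem~\ref{thm:varietyOfIsoClasses}; it is insensitive to the field $\bbk$, and when $\bbk$ is algebraically closed one may instead quote the ``moreover'' clause of that theorem, whose hypothesis $\dim\calX^\bfF_a=0$ is then implied by the displayed upper bound.)

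It then remains to prove the purely numerical claim that $H^i(\End_S^\bullet(\bfF))_{a-ia}=0$ for $2\leq i\leq\ell$ and all $a\gg0$. Here I would use that $\bfF$ is a bounded complex of finitely generated free modules, so that $\End_S^\bullet(\bfF)$ is itself a bounded complex of finitely generated graded free $S$-modules and each $H^i(\End_S^\bullet(\bfF))$ is a finitely generated graded $S$-module, hence bounded below in internal degree by some integer $c$ (take the minimum of the finitely many individual bounds). For $2\leq i\leq\ell$ and $a>0$ the internal degree in question is $a-ia=-(i-1)a\leq -a$, which drops below $c$ as soon as $a>\max(0,-c)$; this gives $a$-rigidity for all such $a$ and finishes the proof. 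I would also note that the finite-length hypothesis on $H(\bfF)$, while not strictly needed for the stated ``$a\gg0$'' conclusion, forces each $H^i(\End_S^\bullet(\bfF))$ to actually have finite length---localizing at a non-maximal prime turns $\bfF$ into a bounded exact, hence split, complex of free modules, so $\End$ becomes exact there as well---and combining the resulting upper bound on internal degrees with the lower bound, the same computation shows $\bfF$ is in fact $a$-rigid for all $|a|\gg0$.

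The step I expect to require the most care is the reduction in the first paragraph: one must be certain that the vanishing of the relevant cohomology forces the free flag to agree with the fold \emph{up to isomorphism}, rather than merely that each lifting obstruction vanishes. This is exactly what Lemma~\ref{lem:param} (the conjugation-by-$P_i$ argument) supplies, and it is the one genuinely non-formal input; everything after that is bookkeeping of homological versus internal degrees together with elementary finiteness properties of finitely generated graded modules over an $\bbN$-graded ring.
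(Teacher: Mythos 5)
Your argument is correct and takes essentially the same route as the paper's (much terser) proof: isomorphism classes of degree $a$ free flags anchored on $\bfF$ are controlled by the graded pieces $H^i(\End_S^\bullet(\bfF))_{a-ia}$ for $2\le i\le \ell$ via Lemmas \ref{lem:build-aff}, \ref{lem:obstructions}, \ref{lem:param}, and \ref{lem:ext}, and these pieces vanish once $a$ is large because the relevant cohomology is concentrated in a bounded range of internal degrees. Your further observation---that finite generation alone (degrees bounded below) already yields the stated $a\gg 0$ conclusion, with the finite length hypothesis serving to upgrade this to $|a|\gg 0$ as in the introduction's formulation---is a correct sharpening of the reasoning the paper leaves implicit.
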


\begin{proof}
The isomorphism classes are determined by the size of the cohomology of the endomorphism complex $\End_S^\bullet ( \bfF)$. If $\bfF$ has finite length homology, the cohomology of $\End_S^\bullet ( \bfF)$ is $0$ in sufficiently large degrees.
\end{proof}

\begin{remark}
    The statement of Theorem \ref{thm:finiteLengthAsymptotics} may be interpreted as a statement on the asymptotic behavior of free flags anchored on finite length complexes. In particular, it implies that ``interesting" homogeneous deformations of homogeneous complexes with finite length homology are exceptional, in the sense that there is only a finite window where such deformations may occur. 
\end{remark}





\subsection{Differential modules with small rank}


In this section, we revisit the Rank Conjectures of Buchsbaum--Eisenbud--Horrocks \cite{buchsbaum1977algebra, HARTSHORNE1979117}, Halperin \cite{Halperin1985AspectsOT} and Carlsson \cite{10.1007/BFb0072815}. These conjectures, concerning lower bounds on the Betti numbers of free resolutions and toral rank of certain manifolds and CW complexes, formed part of the motivation for \cite{avramov2007class}. In searching for a unifying conjecture that would imply both the algebraic and topological rank conjectures, they posited that conjectures on total rank of free complexes should also hold for differential modules admitting a free flag structure. While Walker proved the Total Rank Conjecture for minimal free resolutions (outside of characteristic 2) \cite{walker2017total}, the more general conjecture was shown to be false outside of characteristic $2$ even for \emph{complexes} by Iyengar-Walker \cite{iyengar2018examples}. In light of these results, we are curious about where precisely counterexamples to the more general conjecture can occur. We explore this question by relating the total Betti number of a differential module to the total Betti numbers of its homology. To do this, we introduce the notion of \emph{Betti-deficiency}, which can be viewed as a twist on $a$-rigidity. First, recall the definition of a Betti number for graded differential modules:

\begin{definition}
    The \emph{$j\th$ Betti number} $\beta_j (D)$ of a $\bbZ$-graded differential module $D$ is defined as
    $$\beta_j (D) := \dim_\bbk H(F \otimes^{DM}_S \bbk)_j,$$
    where $F$ is any minimal free resolution of $D$ (as in \cite{brown2021minimal}). The \emph{total Betti number} is \[
    \beta(D) = \sum_{j\in\bbZ} \beta_j(D).
    \] 
    A differential module $D$ is \emph{Betti-deficient} is there is a strict inequality
    $$\beta (D) < \beta (H(D)),$$
    where $H(D)$ is being viewed as a differential module with $0$ endomorphism.
\end{definition}

The $j\th$ Betti number of $D$ counts the number of generators in degree $j$ of a minimal free resolution of $D$. Likewise, the total Betti number counts the total number of generators (i.e. the total rank) of a minimal free resolution of $D$.

\begin{remark}
    As noted in \cite{brown2021minimal}, Theorem \ref{thm:danMikeThm} implies that there is always an inequality
    $$\beta (D) \leq \beta (H(D)),$$
    so Betti-deficient differential modules are the cases for which this inequality is strict. 
\end{remark}

\begin{remark}
    Notice that if $M$ is an $a$-rigid $S$-module, then there are \emph{no} Betti-deficient differential modules with homology $M$. This is not an equivalence, however, and makes the notion of Betti-deficiency a slightly more subtle property to study.
\end{remark}

 Using the fact that any free flag resolution $F$ of a differential module $D$ may be chosen to be anchored on a minimal free resolution of $H(D)$, this means that a differential module is Betti-deficient if and only if the matrix representation of the endomorphism $d^F$ has entries residing in the field $\bbk$. On the other hand, we know that the higher off-diagonal maps are equivalently described as elements of the endomorphism complex associated to the anchor of $F$. Combining these two facts yields the following:

\begin{theorem}\label{thm:BettiSlope}
    Let $M$ be an $S$-module with finite projective dimension. 
If there exists a Betti-deficient differential module $D\in\DM(S,a)$ with $H(D) = M$, then the Betti table for $M$ must have two nonzero entries (in nonadjacent columns) lying on a line of slope $1-a + \frac{a}{j}$ for some $j\geq 2$.
\end{theorem}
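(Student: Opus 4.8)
The statement is about when a Betti-deficient differential module $D$ with $H(D) = M$ can exist. My starting point is the observation already assembled in the text: by the Brown--Erman theorem, $D$ is quasi-isomorphic to a free flag anchored on a minimal free resolution $\bfF$ of $M$, and $D$ is Betti-deficient precisely when the minimal free resolution $F$ of $D$ has an endomorphism $d^F$ with all entries in $\bbk$ — equivalently, the matrix for the total differential of the (minimized) free flag has at least one nonzero entry, and \emph{every} nonzero entry is a unit (scalar). In particular, such a free flag is \emph{not} equal to $\fold^a(\bfF)$, so at least one higher structure map $\delta_j$ (with $j \geq 2$) contributes a nonzero scalar entry. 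The plan is to track the internal degrees: a nonzero scalar entry of $\delta_j$ forces a homogeneous $\bbk$-linear isomorphism between two free modules appearing in the resolution, and that equality of twists translates directly into two Betti numbers of $M$ lying on a line of the asserted slope.

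First I would fix the bookkeeping. By Remark \ref{rmk:degree}, the higher map $\delta_j$ restricted to $F_i \to F_{i-j}$ is a degree-$a$ map $F_i(ia) \to F_{i-j}((i-j)a)$, which is the same as a degree $a(j-i+1)$ \dots more usefully, writing $F_i = \bigoplus_p S(-b_{i,p})$ in the minimal free resolution of $M$, the component of $\delta_j$ sending the $q$-th generator of $F_i$ (twisted by $ia$) to the $p$-th generator of $F_{i-j}$ (twisted by $(i-j)a$) is a homogeneous element of $S$ of degree
\[
b_{i,q} - ia + (i-j)a - b_{i-j,p} \cdot (-1) \cdot (-1),
\]
so I would carefully compute that this degree equals $b_{i,q} - b_{i-j,p} - ja + a$ (I will need to be precise about sign/twist conventions, but the structure is forced). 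For this element to be a \emph{nonzero scalar} it must have degree $0$, hence $b_{i,q} = b_{i-j,p} + (j-1)a$. Now $(i,b_{i,q})$ and $(i-j, b_{i-j,p})$ record two nonzero entries of the Betti table of $M$, in homological positions $i$ and $i-j$ with $j \geq 2$ (so nonadjacent columns). Rewriting the relation as a line through the Betti-table points $(i, b_{i,q}/?)$ — here I use the standard Betti-table convention where the entry in column $i$ of row $b$ records $\beta_{i, b}$ and one plots $(i, b - i)$ — the slope of the line through $(i-j, b_{i-j,p} - (i-j))$ and $(i, b_{i,q} - i)$ is
\[
\frac{(b_{i,q}-i) - (b_{i-j,p} - (i-j))}{i - (i-j)} = \frac{(j-1)a - j}{j} = 1 - a + \frac{a}{j},
\]
which is exactly the claimed slope. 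So the core of the proof is: (a) reduce to a minimal free flag anchored on the minimal free resolution of $M$ using Theorem \ref{thm:danMikeThm}; (b) observe Betti-deficiency forces some $\delta_j$, $j \geq 2$, to have a nonzero scalar entry; (c) do the degree computation above to extract the two Betti-table positions and compute the slope.

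The main obstacle I anticipate is step (b) — making rigorous the claim that Betti-deficiency is equivalent to the minimal free flag's differential having \emph{all} entries scalars and at least one of them nonzero off the anchor's diagonal blocks. The inequality $\beta(D) \le \beta(H(D))$ from Brown--Erman is equality iff the minimal free flag \emph{is} $\fold^a(\bfF)$ (all higher maps minimal, i.e.\ the flag is just the folded resolution with its original nonscalar differential), so strict inequality means the minimization of the free flag strictly shrinks the underlying free module; I need to argue that this shrinking happens exactly by cancelling a scalar entry, which must sit in a higher block $\delta_j$ with $j\ge2$ since the anchor $\bfF$ is already minimal (its $\delta_1$ blocks have entries in $\m$). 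One has to be slightly careful that cancellation in a differential module (over $S[t]/t^2$, or directly) still proceeds by Gaussian elimination on unit entries and that a unit can only appear in a $\delta_j$ block with $j \geq 2$; this is where I'd invoke the minimality of $\bfF$ and the structure of the flag differential as a strictly upper-triangular block matrix whose first off-diagonal is the (minimal) anchor. Once that is nailed down, the degree arithmetic in (c) is routine, modulo pinning down the twist conventions consistently with Remark \ref{rmk:degree}.
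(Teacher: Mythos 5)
Your overall strategy coincides with the paper's proof: reduce via Theorem \ref{thm:danMikeThm} to a free flag resolution anchored on the minimal free resolution $\bfF$ of $M$, observe that Betti-deficiency means the Brown--Erman minimization procedure strictly shrinks the flag, i.e.\ the flag differential contains a unit entry, note that such a unit must occur in a block $\delta_j$ with $j \geq 2$ because the anchor is minimal, and then convert the homogeneity of that unit entry into a relation between two graded Betti numbers of $M$. Your degree relation $b_{i,q} - b_{i-j,p} = (j-1)a$ is exactly the relation $\ell - k = (1-j)a$ used in the paper, so the substance of the argument is the same.

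Two points need repair. First, your opening characterization of Betti-deficiency (``the minimal free resolution $F$ of $D$ has an endomorphism $d^F$ with all entries in $\bbk$,'' ``every nonzero entry is a unit'') is not correct as stated: the minimal free resolution of $D$ has, by minimality, no unit entries at all. The correct statement --- which you do reach in your final paragraph, and which is what the paper invokes via the minimization procedure of Brown--Erman --- is that $\beta(D) < \beta(H(D))$ exactly when the free flag resolution anchored on $\bfF$ (before minimization) contains a unit entry, and minimality of the anchor forces that unit to sit in some $\delta_j$ with $j \geq 2$. Second, your final slope computation has a sign slip: with the Cartesian convention you chose (plotting $\beta_{i,b}$ at the point $(i,\, b-i)$ with the row coordinate increasing upward), the slope through $(i-j,\, b_{i-j,p}-(i-j))$ and $(i,\, b_{i,q}-i)$ is $\frac{(j-1)a - j}{j} = a - \frac{a}{j} - 1$, which is the \emph{negative} of $1 - a + \frac{a}{j}$; the asserted identity $\frac{(j-1)a-j}{j} = 1-a+\frac{a}{j}$ is false. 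The slope in the theorem statement corresponds to the printed Betti-table convention in which the row index increases downward (the paper's computation takes the rise from column $i$ to column $i-j$ against a positive run $j$). The two nonzero entries and the degree relation between them are identical to the paper's; you only need to fix the orientation convention so that the displayed slope matches the statement.
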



\begin{proof}
   Let $M$ have minimal free resolution 
   \[
   \bfF : F_0\leftarrow F_1\leftarrow \cdots\leftarrow F_n
   \]
   where $F_i = \bigoplus_{k\in\bbZ}S(-k)^{\beta_{i,k}}$. 
   We will use the fact that the total Betti number of a differential module $(D,\del)$ is equal to the number of generators in a minimal free resolution of $D$, and that by \cite[Theorems 3.2 and 4.2]{brown2021minimal} we can assume that our minimal free resolution is a direct summand of an anchored free flag resolution of $D$. Using the minimization procedure in \cite[Proposition 4.1]{brown2021minimal}, $D$ has a minimal free resolution that is strictly smaller than its anchored free flag resolution exactly when the matrix representation of $\del$ contains a unit. Such a unit corresponds to a degree 0 map 

\begin{center}
\begin{tikzcd}[row sep=small]
F_i(ia) \arrow[rr] \arrow[d, equal] &  & F_{i-j}((i-j+1)a) \arrow[d, equal] \\
{\bigoplus_k S(ia-k)^{\beta_{i,k}}}               &  & {\bigoplus_\ell S((i-j+1)a-\ell)^{\beta_{i-j,\ell}}}         
\end{tikzcd}
\end{center}

where $j\geq 2$. This means that for some $i,j,k,\ell$ with $\beta_{i,k}, \beta_{i-j,\ell} \neq 0$, there is an equality 
\begin{align*}
ia-k &= (i-j+1)a-\ell \\
\ell - k &=(1-j)a.
\end{align*}

In the Betti table for $M$, $\beta_{i,k}$ is located in column $i$ and row $k-i$, and $\beta_{i-j,\ell}$ is located in column $i-j$ and row $\ell-(i-j)$, so $\beta_{i,k}$ and $\beta_{i-j,\ell}$ lie on a line of slope
\begin{align*}
    \frac{\ell-(i-j)-(k-i)}{j} = \frac{\ell-k+j}{j} = \frac{(1-j)a+j}{j} = 1-a+\frac{a}{j}.
\end{align*}
\end{proof}

To more concretely illustrate the idea of Theorem \ref{thm:BettiSlope} and its proof, suppose that $M$ has minimal free resolution 
\[
F_0\xleftarrow{\del_{1,0}} F_1\xleftarrow{\del_{2,1}} F_2\xleftarrow{\del_{3,2}} F_3
\]
and let $D$ be a degree $a$ differential module with homology $M$. If $D$ is Betti-deficient, then the matrix representing the differential of $D$'s minimal free resolution contains a unit. Suppose for example that the unit is in the red block in the matrix below, which we can visualize via the augmented complex on the right.
\begin{figure}[h]
\begin{minipage}[b]{0.22\textwidth}
    \vspace{20pt}
    $\begin{pmatrix}
        0 & \del_{1,0} & \del_{2,0} & \del_{3,0} \\
        0 & 0 & \del_{2,1} & \textcolor{red}{\del_{3,1}} \\
        0 & 0 & 0 & \del_{3,2}\\
        0 & 0 & 0 & 0
    \end{pmatrix}$
\end{minipage}
\begin{minipage}[b]{0.45\textwidth}
    \begin{tikzcd}[column sep=tiny]
F_0 \arrow[d, equal] & F_1(a) \arrow[l, "\del_{1,0}"] \arrow[d, equal] & F_2(2a) \arrow[l, "\del_{2,1}"]\arrow[d, equal] \arrow[ll, bend right, shift right=.75ex, "\del_{2,0}"] & F_3(3a) \arrow[l, "\del_{3,2}"] \arrow[lll, bend right, "\del_{3,0}"', shift right=1.5ex] \arrow[ll, bend right, red, crossing over, shift right=.75ex, "\del_{3,1}"]\arrow[d, equal] \\[-10pt] |[node font=\tiny]|
\bigoplus S(-j)^{\beta_{0,j}} & |[node font=\tiny]| \bigoplus S(a-j)^{\beta_{1,j}} & |[node font=\tiny]| \bigoplus S(2a-j)^{\beta_{2,j}} & |[node font=\tiny]| \bigoplus S(3a-j)^{\beta_{3,j}}
\end{tikzcd}
\end{minipage}
\end{figure}

The presence of a unit entry in the degree $a$ map $\del_{3,1}$ means that there is a summand of $F_1(a)$ and a summand of $F_3(3a)$ that are generated in the degrees that differ by $a$, i.e some $\beta_{1,j}, \beta_{3,k}\neq 0$ for which $(3a-k)-(a-j) = a$. If, for example, $a = 1$ one pair that would satisfy this is $\beta_{1,3}$ and $\beta_{3,4}$, in red in the Betti table below. The slope of line connecting these two entries is $\frac{1}{2} = 1-\frac{1}{1} + \frac{1}{2}$ as in the theorem.

\begin{figure}[h]
\begin{minipage}[b]{0.3\textwidth}
    \begin{tabular}{r|c c c c}
         &$i: $ 0 & 1 & 2 & 3  \\
        \hline $j-i$ : 0 & $\beta_{0,0}$ & $\beta_{1,1}$ & $\beta_{2,2}$ & $\beta_{3,3}$\\
         1 & $\beta_{0,1}$ & $\beta_{1,2}$ & $\beta_{2,3}$ & \textcolor{red}{$\beta_{3,4}$}\\
         2 & $\beta_{0,2}$ & \textcolor{red}{$\beta_{1,3}$} & $\beta_{2,4}$ & $\beta_{3,5}$\\
         3 & $\beta_{0,3}$ & $\beta_{1,4}$ & $\beta_{2,5}$ & $\beta_{3,6}$\\
        \end{tabular}
\end{minipage}
\end{figure}

In cases where we have a good understanding of the shape of the Betti table, we may obtain concrete numerical conditions for when a differential module may be Betti-deficient. For complete intersections for instance, we can write down a criterion in terms of the degrees of the generators of the ideal:

\begin{corollary}
    Let $I\subset S$ be minimally generated by a regular sequence $(f_1, \ldots, f_\ell)$ where $f_i$ has degree $d_i$. If there is a Betti-deficient differential module in $\DM(S,a)$ with homology $S/I$, then there are subsets $J_1, J_2\subseteq \{1, \ldots, \ell\}$ and $2\leq j\leq\ell$ satisfying $|J_2| = |J_1|+j$ where 
    \[
\sum_{r\in J_1}d_r - \sum_{r\in J_2}d_r = a-aj.
    \]
\end{corollary}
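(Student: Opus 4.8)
The plan is to read the graded Betti table of $S/I$ off the Koszul complex and substitute it into Theorem \ref{thm:BettiSlope}. First I would recall that since $(f_1,\dots,f_\ell)$ is a regular sequence generating $I$, the Koszul complex $K_\bullet = K_\bullet(f_1,\dots,f_\ell;S)$ is a free resolution of $S/I$; because the $f_r$ lie in the graded maximal ideal it is in fact minimal, so $\pdim_S(S/I) = \ell < \infty$ and Theorem \ref{thm:BettiSlope} is applicable. In this resolution the free module in homological position $p$ is
\[
K_p \;=\; \bigoplus_{\substack{J \subseteq \{1,\dots,\ell\} \\ |J| = p}} S\!\left(-\sum_{r\in J} d_r\right),
\]
so a graded Betti number $\beta_{p,k}(S/I)$ is nonzero exactly when $k = \sum_{r\in J} d_r$ for some $p$-element subset $J \subseteq \{1,\dots,\ell\}$.

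Next I would invoke Theorem \ref{thm:BettiSlope}: the existence of a Betti-deficient $D \in \DM(S,a)$ with $H(D) \cong S/I$ forces the Betti table of $S/I$ to contain two nonzero entries in nonadjacent columns on a line of slope $1 - a + a/j$ for some $j \geq 2$; concretely, unwinding the proof of that theorem, there are indices $i$, $k$, $k'$ and an integer $j$ with $2 \leq j$, $i - j \geq 0$, $\beta_{i,k}(S/I) \neq 0$, $\beta_{i-j,k'}(S/I) \neq 0$, and $k' - k = (1-j)a$ (this is the relabeled version of the displayed identity $\ell - k = (1-j)a$ in that proof). The last step is then a direct translation: by the description of the Koszul Betti numbers above there exist subsets $J_2, J_1 \subseteq \{1,\dots,\ell\}$ with $|J_2| = i$, $|J_1| = i - j$, $k = \sum_{r\in J_2} d_r$, and $k' = \sum_{r \in J_1} d_r$. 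Hence $|J_2| = |J_1| + j$, and since $i \leq \pdim_S(S/I) = \ell$ and $j \leq i$ we get $2 \leq j \leq \ell$. Substituting into $k' - k = (1-j)a$ yields
\[
\sum_{r\in J_1} d_r - \sum_{r\in J_2} d_r \;=\; (1-j)a \;=\; a - aj,
\]
which is exactly the claimed conclusion.

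I do not expect a serious obstacle here; the statement is essentially a specialization of Theorem \ref{thm:BettiSlope}. The only points needing mild care are verifying that the Koszul complex is the \emph{minimal} free resolution (so that $\pdim_S(S/I) = \ell$ and the Betti table is literally the list of exterior-power twists, with no cancellation to worry about), and the index bookkeeping that produces the range $2 \le j \le \ell$ and allows $J_1$ to be empty when $i = j$.
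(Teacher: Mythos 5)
Your proposal is correct and takes essentially the same route as the paper: both read the graded Betti numbers of $S/I$ off the (minimal) Koszul resolution, so that nonzero entries occur exactly in degrees $\sum_{r\in J}d_r$ for subsets $J$, and then specialize Theorem \ref{thm:BettiSlope}. Your only variation is extracting the degree relation $k'-k=(1-j)a$ directly from that theorem's proof instead of equating slopes of Betti-table entries, which is the same computation phrased differently.
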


\begin{proof}

The degrees of the $i\th$ syzygies of $S/I$ are $\sum_{r\in J}d_r$ where $|J| = i$, which means that in the $i\th$ column of the Betti table the nonzero entries are located in rows $\left(\sum_{r\in J}d_r\right) - i$ for all subsets $J\subset \{1, \ldots, \ell\}$ with $|J|=i$. Thus the slope of the line connecting two nonzero entries in columns $i,i+j$ for $j\geq 2$ is equal to
\[
\frac{\left(\sum_{r\in J_1} d_r\right) - i - \left(\sum_{r\in J_2} d_r\right) + (i+j)}{j}
\]
where $J_1, J_2\subseteq \{1, \ldots, \ell\}$, $|J_1| = i$, and $|J_2| = i+j$. Setting this equal to the slope given in Theorem \ref{thm:BettiSlope} and simplifying yields the result.

\end{proof}

Another class of Betti tables of which we have a good understanding are the \emph{pure} Betti tables---those with only a single nonzero entry in each column.

\begin{corollary}
    Let $D\in \DM(S,a)$ be a differential module with homology $M$, where $M$ has a pure resolution with degree sequence $(d_0, \ldots, d_\ell)$. If $M$ is Betti-deficient, then 
    \[
    a = \frac{d_{i-j}-d_i}{1-j}
    \]
    for some $2\leq j\leq i \leq \ell$.
\end{corollary}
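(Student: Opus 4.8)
The plan is to obtain this as a direct specialization of Theorem \ref{thm:BettiSlope}; in fact it falls out of the explicit numerical equation extracted in that theorem's proof. First I would recall the mechanism: if $D \in \DM(S,a)$ is Betti-deficient with $H(D) \cong M$, then by \cite[Theorems 3.2 and 4.2]{brown2021minimal} a minimal free resolution of $D$ is a summand of a free flag resolution anchored on the minimal free resolution $\bfF$ of $M$, and Betti-deficiency forces the matrix of its differential to contain a unit entry in some higher block $\del_{i,i-j}$ with $j \geq 2$. Since $\del_{i,i-j}$ is a map from $F_i$ to $F_{i-j}$, both of these modules must be nonzero, which gives $0 \leq i-j$ and $i \leq \ell$, i.e. $2 \leq j \leq i \leq \ell$. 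The key numerical output of the proof of Theorem \ref{thm:BettiSlope} is then the existence of generator degrees $k$ of $F_i$ and $m$ of $F_{i-j}$ (so $\beta_{i,k} \neq 0$ and $\beta_{i-j,m} \neq 0$) satisfying $m - k = (1-j)a$.

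The second and final step is to feed in the purity hypothesis. Writing the pure resolution as $F_p = S(-d_p)^{\beta_p}$ with $\beta_p \neq 0$ for $0 \leq p \leq \ell$, the only nonzero graded Betti number in homological degree $p$ is $\beta_{p,d_p}$; applying this with $p = i$ and $p = i-j$ pins down $k = d_i$ and $m = d_{i-j}$. Substituting into $m - k = (1-j)a$ gives $d_{i-j} - d_i = (1-j)a$, and since $j \geq 2$ the factor $1-j$ is invertible, so $a = \frac{d_{i-j} - d_i}{1-j}$ for this choice of $2 \leq j \leq i \leq \ell$, which is precisely the assertion.

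I do not anticipate any genuine obstacle: the content is entirely contained in Theorem \ref{thm:BettiSlope}, and purity simply eliminates all the freedom in which pair of Betti numbers can lie on the relevant line. The only points warranting a line of care are the index bounds — verifying that a unit cannot occur in the anchor maps $\del_{i,i-1}$ (as $\bfF$ is minimal, forcing $j \geq 2$) and that $F_i$ and $F_{i-j}$ are both nonzero (forcing $j \leq i \leq \ell$) — together with the harmless collision of the symbol $\ell$ (the length of the resolution) with the letter used for a generator degree in the proof of Theorem \ref{thm:BettiSlope}, which I would resolve by renaming the latter.
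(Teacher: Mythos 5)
Your proposal is correct and matches the paper's own argument: both specialize the numerical equation from Theorem \ref{thm:BettiSlope}, use purity to force the relevant graded Betti numbers to be $\beta_{i,d_i}$ and $\beta_{i-j,d_{i-j}}$, and solve $d_{i-j}-d_i=(1-j)a$ for $a$. Your extra remarks on the index bounds $2\leq j\leq i\leq \ell$ and the notational collision are accurate but do not change the route.
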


\begin{proof}
    The Betti table of $M$ has nonzero entries $\beta_{i,d_i}$ located in the $i\th$ column and $(d_i-i)\th$ row. As in Theorem \ref{thm:BettiSlope}, if $M$ is Betti-deficient then there are $\beta_{i,d_i}, \beta_{i-j,d_{i-j}}\neq 0$ with $j\geq 2$ satisfying
    \begin{align*}
ia-d_i & = (i-j+1)a - d_{i-j}\\
d_{i-j}-d_i &=(1-j)a\\
a &=\frac{d_{i-j}-d_i}{1-j}.
    \end{align*}
\end{proof}

Setting $a = 0$ and recalling that the degree sequence for a pure resolution satisfies $d_0<d_1<\cdots <d_\ell$ immediately yields the following:

\begin{corollary}
    Let $D\in \DM(S,0)$ be a differential module with homology $M$, where the graded minimal free resolution of $M$ is pure. Then the total Betti number of $D$ is equal to the sum of the Betti numbers of $M$ (I.e. there are no Betti-deficient degree 0 differential modules with pure homology).
\end{corollary}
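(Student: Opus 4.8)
The plan is to derive this as an immediate specialization of the preceding corollary, so there is essentially nothing new to do. Recall that by definition $D$ is Betti-deficient precisely when the inequality $\beta(D) \leq \beta(H(D))$ — which always holds by the Brown--Erman result (Theorem \ref{thm:danMikeThm}) — is strict. Hence it suffices to show that \emph{no} degree $0$ differential module with homology $M$ can be Betti-deficient; then every such $D$ automatically satisfies $\beta(D) = \beta(M)$, which is the assertion.

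First I would apply the previous corollary with $a = 0$. It tells us that if a Betti-deficient $D \in \DM(S,0)$ with $H(D) = M$ existed, then there would be indices $2 \leq j \leq i \leq \ell$ with
\[
0 = \frac{d_{i-j} - d_i}{1-j},
\]
and since $1-j \neq 0$ this forces $d_{i-j} = d_i$.

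Next I would invoke purity. The degree sequence $(d_0, \dots, d_\ell)$ of a pure (minimal) resolution is strictly increasing, $d_0 < d_1 < \cdots < d_\ell$. Since $j \geq 2$ we have $i - j \leq i - 2 < i$, hence $d_{i-j} < d_i$, contradicting the equality above. Therefore no Betti-deficient degree $0$ differential module with homology $M$ exists, and the result follows.

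I do not expect any genuine obstacle: the substance is entirely contained in the preceding corollary, and the present statement is just the observation that its conclusion becomes vacuous once $a = 0$ and the Betti table is pure. The only point requiring a moment of care is the bookkeeping that ``not Betti-deficient'' together with the standing inequality $\beta(D) \leq \beta(H(D))$ yields the desired equality, which is immediate from the definition of Betti-deficiency.
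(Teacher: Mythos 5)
Your proposal is correct and matches the paper's own argument: the paper likewise specializes the preceding corollary on pure resolutions to $a=0$ and uses the strict inequality $d_0 < d_1 < \cdots < d_\ell$ of the degree sequence to rule out any Betti-deficient differential module. Your extra remark that non-deficiency plus the standing inequality $\beta(D) \leq \beta(H(D))$ gives the equality is exactly the implicit bookkeeping in the paper's statement.
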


\subsection{Beyond free flags}

We conclude with a curious observation connecting the idea of ``higher maps" of a free flag with the notion of ``systems of higher homotopies" coming from the matrix factorization literature.

Notice that the sequence of maps $\delta_1 , \dots , \delta_n$ in the definition of a free flag must satisfy the following equations in order for the endomorphism of the free flag to square to $0$:
$$\sum_{j=1}^{i-1} \delta_j \circ \delta_{i-j} = 0.$$
One can abstract these equations to yield different types of differential modules:

\begin{example}
Let $X$ denote a generic $5 \times 5$ skew-symmetric matrix and $R = k[X]$ its coordinate ring, where $k$ is any field. Recall that for an indexing set $I = (i_1 < \cdots < i_\ell)$, the notation $\pf_I (X)$ denotes the pfaffian of the submatrix of $X$ formed by deleting rows and columns $i_1 , \dots , i_\ell$ from $X$. The ideal of submaximal pfaffians $\pf (X)$ is a grade $3$ Gorenstein ideal with minimal free resolutions of the form
$$F: \quad 0 \to R = F_3 \xrightarrow{d_1^t}  R^5 = F_2 \xrightarrow{X} R^5 = F_1 \xrightarrow{d_1} R \to 0, \quad \textrm{where}$$
$$d_1 = \begin{pmatrix} \pf_1 (X) & - \pf_2 (X) & \dots & \pf_5 (X) \\ \end{pmatrix}.$$ 
Let $e_1 , \dots , e_5$, $f_1, \dots , f_5$, and $g$ denote bases of $F_1$, $F_2$, and $F_3$, respectively. The complex $F$ admits the structure of a graded-commutative DG-algebra extending the standard $S$-module structure with products (see \cite[Example 2.1.3]{avramov1998infinite})
$$e_i e_j = - e_j e_i = \sum_{k \neq i, j} (-1)^{i+j+k} \pf_{ijk} (X) f_k, \quad (i<j)$$
$$e_i f_j = f_j e_i = \delta_{ij} g.$$
Consider the map that multiplies on the left by $e_1$, denoted $\ell_{e_1}$. The matrix representation of $\ell_{e_1}$ restricted to each graded component may be computed using the above product formula as so:

\begin{minipage}{2.5cm}
    \[
\begin{pmatrix} 1 \\ 0 \\ 0 \\ 0 \\ 0 \end{pmatrix} \colon R^1 \to R^5,
    \]
\end{minipage}
\begin{minipage}{8cm}
    \[
\begin{pmatrix}
0 & 0 & 0 & 0 & 0 \\
0 & 0 & x_{45} & -x_{35} & x_{34} \\
0 & -x_{45} & 0 & x_{25} & - x_{24} \\
0 & x_{35} & - x_{25} & 0 & x_{23} \\
0 & -x_{34} & x_{24} & -x_{23} & 0 \\
\end{pmatrix} \colon R^5 \to R^5,
    \]
\end{minipage}
\begin{minipage}{4cm}
\vspace{-5pt}
\[
\begin{pmatrix} 1 & 0 & 0 & 0 & 0\\ \end{pmatrix}  \colon R^5 \to R^1.
\]
\end{minipage}\\

Consider the following block matrix:
$$d^D := \begin{pmatrix} 0 & d_1 & 0 & 0 \\
\ell_{e_1} & 0 & X & 0 \\
0 & -\ell_{e_1} & 0 & d_1^t \\
0 & 0 & \ell_{e_1} & 0 \\
\end{pmatrix}.$$
One can verify by direct computation that ${d^D}^2 = \pf_1 (X) \textrm{id}_F$. In other words, the same process for constructing a free flag has yielded a ``curved" (in the sense of, for instance, \cite{keller2010non}) differential module. Using the induced $\bbz / 2 \bbz$-grading to split $F$ into its odd and even parts yields a nonminimal matrix factorization of $\pf_1 (X)$. In the language of matrix factorizations, the maps $\delta_2 , \dots , \delta_n$ would be called a system of higher homotopies (in this case, there is only a single higher homotopy induced by left multiplication by $e_1$).
\end{example}

\bibliographystyle{amsalpha}
\bibliography{biblio}
\addcontentsline{toc}{section}{Bibliography}

\end{document}